\newtheorem{theorem}{Theorem}
\newtheorem{proposition}{Proposition}
\newtheorem{corollary}{Corollary}
\newtheorem{fact}{Fact}
\newtheorem{lemma}{Lemma}
\newtheorem{definition}{Definition}
\newtheorem{remark}{Remark}
\newcommand{\thmref}[1]{Theorem~\ref{thm:#1}} 
\newcommand{\lemref}[1]{Lemma~\ref{lem:#1}} 
\newcommand{\propref}[1]{Proposition~\ref{prop:#1}} 
\newcommand{\defref}[1]{Definition~\ref{def:#1}} 
\newcommand{\secref}[1]{Section~\ref{sec:#1}} 
\newcommand{\eqnref}[1]{(\ref{eq:#1})} 
\newcommand\ignore[1]{}
\def\R{\mathbb{R}} 
\def\N{\mathbb{N}} 
\newcommand{\Ex}[1]{\mathbb{E}\left[#1\right]} 
\newcommand{\Prp}[2]{\mathbb{P}_{#1}\left(#2\right)} 
\newcommand{\Exp}[2]{\mathbb{E}_{#1}\left[#2\right]} 
\newcommand{\Prpwo}[1]{\mathbb{P}_{#1}} 
\newcommand{\Expwo}[1]{\mathbb{E}_{#1}} 
\newcommand{\Ind}[1]{\mathbb{I}_{#1}} 
\renewcommand{\Pr}[1]{\mathbb{P}\left(#1\right)} 
\def\sA{\mathcal{A}}
\def\sF{\mathcal{F}}
\newcommand\QED{\ifhmode\allowbreak\else\nobreak\fi
\quad\nobreak$\Box$\medbreak}
\newcommand{\proofstart}{\par\noindent\sl Proof:\rm\enspace}
\newcommand{\proofend}{\QED\par}
\newenvironment{proof}{\proofstart}{\proofend}
\def\Tmix{{\rm t}_{\rm mix}}
\def\Thit{{\rm t}_{\rm hit}}
\def\Trel{t_{{\rm rel}}}
\def\Tmix{t_{{\rm mix}}}
\def\Thit{t_{{\rm hit}}}
\def\Tcoal{t_{{\rm coal}}}
\def\one{{\bf 1}}
\newcommand\ip[2]{\langle #1,#2\rangle}
\def\davg{d_{\rm avg}}
\def\dmax{d_{\rm max}}
\def\dmin{d_{\rm min}}
\def\taucoal{\tau_{\rm coal}}
\title{Random walks on graphs: new bounds on hitting, meeting, coalescing and returning}
\author{Roberto I. Oliveira\thanks{IMPA, Rio de Janeiro, RJ, Brazil. \texttt{rimfo@impa.br}. Supported by a {\em Bolsa de Produtividade em Pesquisa} grant from CNPq (Brazil) and a {\em Cientista do Nosso Estado} grant from FAPERJ (Brazil).}{~~}and{~}Yuval Peres\thanks{Microsoft Research, Redmond, WA, USA. \texttt{peres@microsoft.com}}}
\begin{document}
 \maketitle

\begin{abstract}We prove new results on lazy random walks on finite graphs. To start, we obtain new estimates on return probabilities $P^t(x,x)$ and the maximum expected hitting time $\Thit$, both in terms of the relaxation time. We also prove a discrete-time version of the first-named author's ``Meeting time lemma"~ that bounds the probability of random walk hitting a deterministic trajectory in terms of hitting times of static vertices. The meeting time result is then used to bound the expected full coalescence time of multiple random walks over a graph. This last theorem is a discrete-time version of a result by the first-named author, which had been previously conjectured by Aldous and Fill. Our bounds improve on recent results by Lyons and Oveis-Gharan; Kanade et al; and (in certain regimes) Cooper et al. \end{abstract}
\pagebreak

\section{Introduction}
Random walks on graphs and other reversible Markov chains are fundamental mathematical objects, with applications in algorithms and beyond. In this paper, we derive new bounds on return probabilities, hitting times, and meeting deterministic trajectories for these processes. We also bound the expected full coalescence time of multiple random walks on the same graph. 

We state our main results in terms of lazy random walks on graphs, although some of them hold for more general chains (cf. \secref{prelim}). In the paper $G=(V,E)$ is always a finite, unoriented, connected graph with vertex set $V$, with no loops or parallel edges and $n:=|V|\geq 2$ vertices. The degree of $x\in V$ is denoted by $d_x$. Minimum, maximum, and average degrees in $G$ are denoted by $\dmin$, $\dmax$ and $\davg$, respectively. The parameter $t$ always denotes discrete time and takes values in $\N=\{0,1,2,\dots\}$.

\begin{definition}\label{def:LRW}Lazy random walk (LRW) on $G$ is the Markov chain on $V$ with transition matrix $P$ given by:
\[P(x,y):=\left\{\begin{array}{ll}\frac{1}{2}, & y=x;\\ \frac{1}{2d_x}, & xy\in E;\\ 0, & \mbox{ otherwise}\end{array}\right.\;\;\;(x,y\in V).\]
We let $(X_t)_{t=0}^{+\infty}$ denote trajectories of the chain and $\Prpwo{x}$, $\Expwo{x}$, $\Prpwo{\mu}$, $\Expwo{\mu}$ denote probabilities and expectations when $X_0=x$ or $X_0$ has law $\mu$ (respectively). Finally, we denote by $\pi$ the stationary distribution:
\[\pi(x):=\frac{d_x}{\davg\,n}\,\,(x\in V).\]\end{definition}

Recall that the hitting time of $A\subset V$ is \[\tau_A:=\inf\{t\geq 0\,:\,X_t\in A\}.\] We write $\tau_x:=\tau_{\{x\}}$ for $x\in V$. The maximum expected hitting time of LRW on $G$,
\[\Thit := \max_{y,x\in V}\,\Exp{y}{\tau_x},\]
is a natural parameter for a Markov chain, with connections to electrical network theory, cover times, mixing times and many other aspects of such processes.  See \cite{LPW_book} (especially Chapter 10) for more information. 

\subsection{Hitting and returning} 

Our first two results estimate $\Thit$ and return probabilities $P^t(x,x)$ in terms of the relaxation time of $G$ (cf. \secref{prelim}). These quantities are related by the formula \cite[Proposition 10.26]{LPW_book}:
\begin{equation}\label{eq:hitformula}\pi(x)\Exp{\pi}{\tau_x} = \sum_{s=0}^{+\infty}(P^s(x,x)-\pi(x)).\end{equation}
\begin{theorem}[Proof in \secref{proof.thit}]\label{thm:thit}Under \defref{LRW},\[\Thit\leq \frac{20\,\davg}{\dmin}\,n\,\sqrt{\Trel+1}.\]\end{theorem}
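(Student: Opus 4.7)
The plan is to reduce the problem to bounding $E := \pi(x)\Exp{\pi}{\tau_x}$ by $C\sqrt{\Trel+1}$ uniformly in $x$, and then to transfer this estimate to $\max_y \Exp{y}{\tau_x}$. Once $E \leq C\sqrt{\Trel+1}$ is in hand, one has $\Exp{\pi}{\tau_x}\leq C\sqrt{\Trel+1}/\pi(x) \leq C(n\davg/\dmin)\sqrt{\Trel+1}$, and a standard Paley--Zygmund/second-moment argument applied to the number of visits of the chain to $x$ in a window of length a constant multiple of $\Exp{\pi}{\tau_x}$ gives $\max_y \Exp{y}{\tau_x} \leq C'\,\Exp{\pi}{\tau_x}$, which yields the theorem after absorbing constants into the factor $20$.

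For the key estimate $E \leq C\sqrt{\Trel+1}$, I would use \eqref{eq:hitformula} to write $E = \sum_{s\geq 0} R(s)$ where $R(s) := P^s(x,x)-\pi(x)$. Laziness forces every eigenvalue $\lambda_i$ of $P$ to lie in $[0,1]$, so the spectral decomposition $R(s) = \pi(x)\sum_{i\geq 2}\psi_i(x)^2\lambda_i^s$ yields two crucial facts: the pointwise decay $R(s)\leq \lambda_2^s\leq e^{-s/\Trel}$ (using $\pi(x)\sum_{i\geq 2}\psi_i(x)^2\leq 1$) and, via Cauchy--Schwarz on the eigenfunction sum, the self-improving estimate $R(s)^2 \leq (1-\pi(x))R(2s)\leq R(2s)$. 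Summing the latter over $s$ gives the $L^2$ control $\sum_s R(s)^2 \leq E$.

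I would then split $E = \sum_{s\leq T} R(s) + \sum_{s>T}R(s)$ at a cutoff $T\asymp \Trel$. The tail is $\leq \Trel e^{-T/\Trel} = O(1)$ by the spectral decay. For the head, rather than the naive Cauchy--Schwarz bound $\sum_{s\leq T}R(s)\leq\sqrt{(T+1)E}$ (which would only give $E=O(\Trel)$, a factor $\sqrt{\Trel+1}$ too large), one exploits the monotonicity of $R$ together with the self-referential $L^2$ bound: since $R$ is non-increasing, $R(s)\leq \bigl(\sum_{s'\leq s}R(s')\bigr)/(s+1)\leq E/(s+1)$, so $R(s)\leq \min(E/(s+1), \lambda_2^s)$. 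Combining this improved pointwise control with $\sum R^2\leq E$ and balancing $T$ closes the argument with the desired $\sqrt{\Trel+1}$ scaling.

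The main obstacle is precisely this sharpening: the straightforward combination of the spectral decay and the $L^2$ self-bound is not enough, and one must use all three pieces of information (pointwise decay, monotonicity of $R$, and $\sum R(s)^2 \leq E$) simultaneously, perhaps via a short bootstrap on $E$ that converts the crude $E = O(\Trel)$ estimate into the desired $E = O(\sqrt{\Trel+1})$. The subsequent conversion from the $\pi$-averaged hitting time to $\max_y \Exp{y}{\tau_x}$ is comparatively routine and contributes only constants to the final bound.
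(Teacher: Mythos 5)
Your plan has a fatal gap at its core step, and the reason is instructive.

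You aim to prove $E := \pi(x)\Exp{\pi}{\tau_x} = \sum_{s\geq 0}R(s) \leq C\sqrt{\Trel+1}$ using only three spectral ingredients: the pointwise decay $R(s)\leq\lambda_2^s$, the monotonicity of $R$, and the self-referential $L^2$ bound $\sum_s R(s)^2\leq E$. All three of these ingredients hold for \emph{every} reversible chain with nonnegative spectrum (your derivations of them are correct), yet the conclusion $E\leq C\sqrt{\Trel+1}$ is \emph{false} in that generality. The ``worst case'' chain $P_* = (1-\Trel^{-1})I + \Trel^{-1}\Pi$ discussed in Section~\ref{sec:general} has $R(s)=(1-\Trel^{-1})^s(1-\pi(x))$, hence $E=(1-\pi(x))\Trel=\Theta(\Trel)$, while it satisfies $R(s)\leq\lambda_2^s$, $R$ non-increasing, and $\sum_s R(s)^2\leq E$ exactly as required by your hypotheses. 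So no bootstrap, balancing of $T$, or combination of these three pieces of information can possibly deliver the $\sqrt{\Trel+1}$ scaling: the information set is insufficient. (Related symptoms: with $T\asymp\Trel$ the tail is $\Trel e^{-T/\Trel}=\Theta(\Trel)$, not $O(1)$; and $R(s)\leq E/(s+1)$ is circular when $E$ is the unknown.)

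What is missing is precisely the graph-specific input that the paper supplies via electrical network theory. The paper bounds $\Exp{\pi}{\tau_x}$ through a Green's-function estimate (Lemma~\ref{lem:green}: $g_t(x,x)/\pi(x)\leq 6(\davg n/\dmin)\sqrt{t+1}$ for $t\leq\lceil\Trel\rceil$), which in turn rests on Propositions~\ref{prop:maxtrel} and~\ref{prop:exitset}. Those propositions crucially use that the graph has diameter $O(n/\dmin)$ (Fact~\ref{fact:path}) and that single-edge resistances are bounded by $2\davg n$, facts that encode the geometry of a bounded-degree-ratio graph and have no analogue for the chain $P_*$. Only after combining this geometric bound with the spectral ``sum up to $\Trel$ and stop'' trick (Lemma~\ref{lem:sumfint}) does the $\sqrt{\Trel+1}$ emerge. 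If you want to salvage your outline, replace the spectral bootstrap with a geometric estimate of the partial sums $g_t(x,x)$ for $t\lesssim\Trel$; the spectral decay alone cannot see the difference between a graph and a generic reversible chain.

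Your final step — transferring from $\Exp{\pi}{\tau_x}$ to $\max_y\Exp{y}{\tau_x}$ — is fine; the paper uses $\Thit\leq 2\max_x\Exp{\pi}{\tau_x}$ directly rather than a second-moment argument, but this part is not where the difficulty lies.
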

\begin{theorem}[Proof in \secref{proof.thit}]\label{thm:return}Under \defref{LRW}, for any $x\in V$ and $t\geq 0$:
\begin{eqnarray*}P^t(x,x) - \pi(x)&\leq& \frac{10\,d_x}{\dmin}\,\left(\frac{1}{\sqrt{t+1}}\wedge \frac{\sqrt{\Trel+1}}{t+1}\right).\end{eqnarray*}\end{theorem}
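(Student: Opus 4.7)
\textbf{Proof plan for \thmref{return}.}
My plan is to derive both halves of the bound from the identity~\eqnref{hitformula} together with monotonicity properties of the return-excess sequence $a_s:=P^s(x,x)-\pi(x)$ and the hitting-time bound of~\thmref{thit}. Writing the spectral decomposition of $P$ in $L^2(\pi)$, since $P$ is reversible and lazy its eigenvalues lie in $[0,1]$, so
$$a_s\;=\;\pi(x)\sum_{i\ge 2}\lambda_i^{\,s}\,f_i(x)^{2},$$
which makes $(a_s)_{s\ge 0}$ non-negative, non-increasing in $s$, and log-convex, with $a_0\le 1-\pi(x)\le 1$. From log-convexity, Cauchy-Schwarz yields $a_t^2\le a_0\,a_{2t}\le a_{2t}$. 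Identity~\eqnref{hitformula} gives $\sum_{s\ge 0}a_s=\pi(x)\Exp{\pi}{\tau_x}$, and feeding in~\thmref{thit} yields
$$M\;:=\;\sum_{s\ge 0}a_s\;\le\;\pi(x)\Thit\;\le\;\frac{20\,d_x\sqrt{\Trel+1}}{\dmin}.$$

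The $\sqrt{\Trel+1}/(t+1)$ half of the bound then follows from monotonicity: $(t+1)a_t\le\sum_{s=0}^{t}a_s\le M$, so $a_t\le M/(t+1)$. The stated constant $10$ (rather than $20$) should be recoverable by tracking the slight gap between $\Exp{\pi}{\tau_x}$ and $\Thit$, or by using the parallel spectral bound $\sum_{s\ge 0}a_s\le\Trel$ (which follows from the spectral formula and $\sum_{i\ge 2}f_i(x)^2\le 1/\pi(x)$) in the regime where that is tighter.

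For the $1/\sqrt{t+1}$ half, my plan is to combine the two elementary estimates $a_t\le 1$ and $a_t\le M/(t+1)$ via the elementary inequality $\min(1,x)\le\sqrt{x}$ for $x\ge 0$, yielding $a_t\le\sqrt{M/(t+1)}$; equivalently, the log-convexity relation $a_t^2\le a_{2t}$ together with $(2t+1)a_{2t}\le M$ gives the same bound. The main obstacle is that substituting the bound on $M$ coming from~\thmref{thit} produces an unwanted $(\Trel+1)^{1/4}$ factor, so the intermediate regime $(d_x/\dmin)^2\lesssim t+1\lesssim\Trel+1$ is not immediately handled. I would close this gap by complementing the above with a direct heat-kernel-type inequality: starting from
$$P^t(x,x)\;=\;\pi(x)\sum_{y}\frac{P^{\lfloor t/2\rfloor}(x,y)^{2}}{\pi(y)}\;\le\;\frac{d_x}{\dmin}\max_{y}P^{\lfloor t/2\rfloor}(x,y),$$
a Nash-style dispersion estimate $\max_y P^s(x,y)\lesssim 1/\sqrt{s+1}$ would produce the $d_x/\dmin$ prefactor exactly once and give the target $1/\sqrt{t+1}$ decay. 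Matching the stated constant $10$ is then a matter of carefully combining these two routes (monotonicity-plus-\thmref{thit} for $t\gtrsim\Trel$, heat-kernel estimate for smaller $t$) and tracking constants.
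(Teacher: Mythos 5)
Your derivation of the $\sqrt{\Trel+1}/(t+1)$ branch is sound and close in spirit to the paper: both you and the paper exploit that $a_s := P^s(x,x)-\pi(x)$ is non-negative and non-increasing, so $(t+1)a_t \le \sum_{s=0}^t a_s$, and bound that partial sum by the full Green's sum $\pi(x)\Exp{\pi}{\tau_x}$. (To recover the constant $10$ rather than $20$ you would use the intermediate bound $\Exp{\pi}{\tau_x}\le \tfrac{6e}{e-1}\tfrac{\davg n}{\dmin}\sqrt{\Trel+1}$ from inside the proof of \thmref{thit}, not the rounded final statement with $\Thit$.)

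The $1/\sqrt{t+1}$ branch, however, contains a genuine gap. You correctly observe that the log-convexity trick $a_t\le\sqrt{M/(2t+1)}$ leaves an unwanted $(\Trel+1)^{1/4}$, and you propose to patch the regime $t\lesssim\Trel$ via the identity $P^t(x,x)=\pi(x)\sum_y P^{\lfloor t/2\rfloor}(x,y)^2/\pi(y)\le (d_x/\dmin)\max_y P^{\lfloor t/2\rfloor}(x,y)$ together with a ``Nash-style dispersion estimate $\max_y P^s(x,y)\lesssim 1/\sqrt{s+1}$.'' But that dispersion estimate is not established, and it is essentially equivalent in content and difficulty to the inequality you are trying to prove: the $1/\sqrt{t+1}$ on-diagonal decay with the degree-ratio prefactor is precisely the technical heart of \thmref{return} (and of the Lyons--Oveis Gharan bound it improves). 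Invoking it as a black box makes the argument circular, or at best leaves the main work undone. The paper closes this exact gap by proving a finite-horizon Green's function bound, $g_t(x,x)/\pi(x)\le 6\,(\davg n/\dmin)\sqrt{t+1}$ for $0\le t\le\lceil\Trel\rceil$ (\lemref{green}), via electrical network theory: $g_{\tau_A-1}(x,x)/\pi(x)$ equals an effective resistance, which is controlled by graph diameter and degree considerations (\propref{exitset} and the path fact). Combined with \lemref{sumfint}, this single Green's function estimate delivers \emph{both} halves of the claimed bound simultaneously, since $(t+1)a_t \le g_t(x,x)-(t+1)\pi(x)\le \tfrac{e}{e-1}\,g_{(\lceil\Trel\rceil-1)\wedge t}(x,x)$. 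To complete your proposal you would need to supply a proof of the Nash-type estimate with the right degree-dependence; the resistance argument in the paper is one way to do that, and I do not see a shorter route from the ingredients you have assembled.
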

These Theorems are specific to random walks on graphs. As we explain in Section \ref{sec:general}, the following bounds are essentially best possible for general reversible chains.

\begin{eqnarray}\label{eq:Thitgeneral}\Thit &\leq &2\max_x\left(\frac{1-\pi(x)}{\pi(x)}\right)\Trel\\ \nonumber & & \mbox{(which is of order }\frac{\davg\,n}{\dmin}\,\Trel\mbox{ for a graph);}\\
\label{eq:returngeneral}P^t(x,x) - \pi(x) &\leq& (1 - \Trel^{-1})^t\,(1-\pi(x)).\end{eqnarray} 

Kanade et al. \cite{Kanade_CoalMeet} obtained the following improvement to \eqnref{Thitgeneral} in the graph setting:
\begin{equation}\label{eq:kanade}\Thit \leq K\,\min\left\{\left(\frac{\davg}{\dmin}\right)^2 \sqrt{\Trel\log \Trel}, \left(\frac{\davg}{\dmin}\right)^{5/2}\,\Tmix\right\}\,n,\end{equation}
where $\Tmix=\Omega(\Trel)$ is the mixing time \cite[Theorem 12.5]{LPW_book} and $K>0$ is universal. \thmref{thit} has better dependence on $\Trel$ and on the ratio $\davg/\dmin$. Examples in Section \ref{sec:sharp} show that our bound is sharp in both of these parameters. 

The best previous result for return probabilities on graphs is due to Lyons and Oveis-Gharan \cite[Theorem 4.9]{LyonsOG_Spectrum}\footnote{That result is for regular graphs, but the authors explain right after the proof that it can be extended to all graphs in the form presented here.}:
\begin{equation}\label{eq:returnruss}P^t(x,x)-\pi(x)< \frac{13\,\dmax}{\dmin}\,\frac{1}{\sqrt{t+1}}.\end{equation}
Theorem \ref{thm:return} improves the constant, replaces $\dmax$ with $d_x$, and improves both \eqnref{returnruss} and \eqnref{returngeneral} when $1\ll t/\Trel\ll \log\Trel$.

\begin{remark}Anna Ben-Hamou (personal communication) improved our original constants in Theorems \ref{thm:thit} and \ref{thm:return}. We thank her for her permission to use these improvements.\end{remark}

\subsection{Results on meeting and coalescing}

Our next result bounds the probability that LRW hits a moving target in terms of $\Thit$.

\begin{theorem}[Proof in \secref{proof.meeting}]\label{thm:meeting}Under \defref{LRW}, for any $t\in\N\backslash\{0\}$ and any sequence $h_0,h_1,\dots,h_t\in V$,
\[\Prp{\pi}{\forall 0\leq s\leq t\,:\, X_s\neq h_s}\leq \left(1-\frac{1}{\Thit}\right)^t.\]\end{theorem}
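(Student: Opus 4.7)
My plan is to prove the one-step contraction $s_t\leq (1-1/\Thit)\,s_{t-1}$ for every $t\geq 1$, where $s_t := \Prp{\pi}{X_r\neq h_r\;\forall\,0\leq r\leq t}$. Since $s_0 = 1-\pi(h_0)\leq 1$, iterating this bound immediately yields $s_t\leq (1-1/\Thit)^t$, which is the theorem.

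To set up the one-step analysis, let $\mu_s$ be the sub-probability measure on $V$ with $\mu_s(y) := \Prp{\pi}{X_s=y,\;X_r\neq h_r\;\forall r\leq s}$. It vanishes at $h_s$, has total mass $s_s$, and satisfies the recursion $\mu_s(y) = \Ind{y\neq h_s}\,(\mu_{s-1}P)(y)$, so
\[
s_{t-1}-s_t \;=\; (\mu_{t-1}P)(h_t).
\]
Introducing the density $d_{t-1}:=\mu_{t-1}/\pi$ and using reversibility of LRW (which gives $\pi(y)P(y,h_t) = \pi(h_t)P(h_t,y)$), this rewrites as $s_{t-1}-s_t = \pi(h_t)\,(Pd_{t-1})(h_t)$. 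The one-step contraction thus reduces to the key inequality
\[
\pi(h_t)\,(Pd_{t-1})(h_t) \;\geq\; \frac{1}{\Thit}\,\langle \pi, d_{t-1}\rangle.\qquad (\star)
\]

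The main obstacle is that $(\star)$ fails for arbitrary nonnegative $d$---a counterexample is any $d$ supported on a vertex $x$ with $P(x,h_t)=0$---so the argument must exploit the specific form $d_{t-1} = K_{t-1}PK_{t-2}P\cdots K_0\one$, where $K_s$ is the diagonal matrix with entries $\Ind{x\neq h_s}$. I would try to pair $d_{t-1}$ with the hitting-time potential $\Phi_t(x):=\Exp{x}{\tau_{h_t}}$, which satisfies $0\leq \Phi_t\leq \Thit$, $\Phi_t(h_t) = 0$, and the Poisson equation $(I-P)\Phi_t = \one - \pi(h_t)^{-1}\,\Ind{\{h_t\}}$. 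Moving $P$ across the pairing by self-adjointness of $P$ in $\ell^2(\pi)$ and iterating the recursion for the $\mu_s$ should yield $(\star)$ via a telescoping computation; the delicate part will be absorbing the correction terms arising from the changing killing vertex, so as to land exactly on the constant $1/\Thit$. A safer fallback is to bound the product operator $K_0PK_1P\cdots K_t$ via a spectral estimate using $\lambda_1(K_hPK_h)\leq 1-1/\Thit$ for each fixed $h$, which itself follows from the same Poisson equation together with the universal bound $\Exp{x}{\tau_h}\leq \Thit$.
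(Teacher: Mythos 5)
Your main route---the one-step contraction $s_t\leq(1-1/\Thit)\,s_{t-1}$, equivalently the inequality $(\star)$---is actually false, so it cannot be rescued by a more careful telescoping. Here is a counterexample. Take the lazy walk on the path $\{0,1,\dots,n\}$, set $h_0=\dots=h_{t-1}=1$ and $h_t=0$. Once the walk is killed at vertex $1$, the leaf $0$ is disconnected from $\{2,\dots,n\}$: a surviving particle at $0$ stays with probability $1/2$ or dies. Hence $\mu_{t-1}(0)=\pi(0)\,2^{-(t-1)}$, while the surviving mass on $\{2,\dots,n\}$ decays only like $\lambda^{t-1}$ with $\lambda=1-\Theta(n^{-2})$, so $\mu_{t-1}(0)/s_{t-1}\to 0$ geometrically. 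Since $\mu_{t-1}(1)=0$, the only way to arrive at $0$ at time $t$ is to stand still at $0$, so $(\mu_{t-1}P)(0)=\tfrac12\mu_{t-1}(0)$ and $(\mu_{t-1}P)(0)/s_{t-1}\to 0$, eventually falling below $1/\Thit$. The theorem survives because the earlier steps already overshot $(1-1/\Thit)^{t-1}$; but the pointwise ratio $s_t/s_{t-1}$ can exceed $1-1/\Thit$. This is exactly why the paper does not argue step-by-step on $s_t$ but in operator norm.

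Your ``fallback'' is the paper's route, and the single-site bound you state is correct: the top eigenvalue of $D_hPD_h$ is $1-1/\Exp{q_h}{\tau_h}\leq 1-1/\Thit$, and your derivation of it by pairing the principal eigenfunction against the Poisson potential $\Phi_h(x)=\Exp{x}{\tau_h}$ is a clean alternative to the paper's citation of the quasistationary theory in Aldous--Fill. But the fallback as written has a gap in passing from the single-site bound to the product: the operator $M_t=D_{h_0}PD_{h_1}\cdots PD_{h_t}$ is built from the \emph{non}-self-adjoint blocks $D_{h_s}PD_{h_{s+1}}$ (the two killing sites differ), and $\|D_hPD_{h'}\|_{\rm op}$ is not controlled by $\max_h\|D_hPD_h\|_{\rm op}$ via plain submultiplicativity. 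The paper's device is to insert the positive semidefinite square root: write $D_{h_s}PD_{h_{s+1}}=\bigl(D_{h_s}\sqrt{P}\bigr)\bigl(\sqrt{P}D_{h_{s+1}}\bigr)$, use $D_h=D_h^2$, submultiplicativity, and the identity $\|D_h\sqrt{P}\|_{\rm op}=\|\sqrt{P}D_h\|_{\rm op}=\|D_hPD_h\|_{\rm op}^{1/2}$ (which needs $P\succeq 0$, hence the laziness assumption). Counting exponents then yields $\|M_t\|_{\rm op}\leq\bigl(\max_h\|D_hPD_h\|_{\rm op}\bigr)^t$. Without this $\sqrt{P}$ factorization you have no way to turn the mixed product into powers of the self-adjoint piece, so you should make that step explicit rather than gesturing at ``a spectral estimate.''

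Finally, note that $\Prp{\pi}{\forall s\leq t: X_s\neq h_s}=\ip{\one}{M_t\one}\leq\|M_t\|_{\rm op}$ (no normalization issue since $\|\one\|=1$ in $\ell^2(\pi)$); your observation $s_0\leq 1$ is the right base case once the operator bound is in place.
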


This is a discrete-time version of the ``Meeting Time Lemma"~in \cite{Oliveira_TAMS}. Notice that this Theorem fails for non-lazy random walk on a bipartite graph. On the other hand, \thmref{meeting} can be extended to all reversible Markov chains with nonnegative spectrum (cf. \secref{prelim} below).  

The original application of the continuous-time version of \thmref{meeting} was to the study of coalescing random walks. Following the argument in  \cite{Oliveira_TAMS}, we may obtain a discrete-time version of the main result of that paper, which had been previously conjectured by Aldous and Fill \cite{aldousfill_book}.  

\begin{theorem}[Definitions in Section \ref{sec:tcoal}, proof sketch in Appendix \ref{sec:proof.tcoal}]\label{thm:tcoal}Under \defref{LRW}, consider $n$ coalescing random walks that evolve according to LRW on $G$. Let $\Tcoal$ denote the expected time until these walks have all coalesced into one. Then $\Tcoal\leq K\,\Thit$ with $K>0$ a universal constant.\end{theorem}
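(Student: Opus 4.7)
My plan is to mirror the continuous-time argument in \cite{Oliveira_TAMS}, substituting Theorem~\ref{thm:meeting} (the discrete-time Meeting Time Lemma) wherever the continuous-time version appeared there. The overall skeleton has three pieces: (i)~reduce the problem to a statement about pairs of walks, (ii)~apply Theorem~\ref{thm:meeting} pairwise, and (iii)~use exchangeability to convert pairwise estimates into a bound on the expected number of surviving particles, which then integrates to an estimate on $\Ex{\Tcoal}$.

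More concretely, I would index each walk by its starting vertex, labelling the resulting trajectory $X^v_t$ for $v\in V$; once two walks meet, they are identified for all later times. Let $N_t$ be the number of distinct particles surviving at time $t$, so $\Tcoal = \inf\{t : N_t = 1\}$. The goal is a tail bound of the form $\Pr{\Tcoal > t} \leq C\,(1-c/\Thit)^t$, which integrates to $\Ex{\Tcoal} = O(\Thit)$. The pairwise input is the core use of Theorem~\ref{thm:meeting}: for two coalescing walks started from $(u,v)$, one trajectory $(X^v_s)_{s\leq t}$, being independent of the increments of $X^u$ before their meeting, may be \emph{conditioned on} and then treated as a moving target for $X^u$. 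After paying a one-time mixing burn-in of $O(\Trel \log n) = O(\Thit)$ to bring $X^u$ close to $\pi$ in total variation, Theorem~\ref{thm:meeting} gives
\[\Pr{X^u_s \neq X^v_s \text{ for all } 0 \leq s \leq t} \leq \left(1-\frac{1}{\Thit}\right)^{t-O(\Thit)} + o(1).\]
Averaging this estimate over pairs via an exchangeability (random-labelling) argument, combined with the partition-counting identity that rewrites $\Ex{N_t - 1}$ as a sum over pairs of non-coalescence probabilities, then yields the desired geometric decay of $\Pr{N_t > 1}$.

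The main obstacle is the one already faced in \cite{Oliveira_TAMS}: in the coalescing system the walks are not mutually independent (they are forcibly identified upon meeting), so one cannot plug directly into a ``two independent walks'' meeting bound. The resolution is precisely the moving-target framing above, which is exactly the form in which Theorem~\ref{thm:meeting} is stated, so the adaptation to discrete time is clean. A secondary, purely technical point, absent from the continuous-time case, is handling laziness and the possibility that the $n$ walks start from an arbitrary configuration rather than from $\pi^{\otimes n}$; both are absorbed into the $O(\Thit)$ mixing burn-in and cost only a worse universal constant $K$.
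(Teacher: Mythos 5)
Your high-level plan (adapt \cite{Oliveira_TAMS} to discrete time by substituting Theorem~\ref{thm:meeting} for the continuous-time meeting lemma) is exactly what the paper does, and your moving-target framing for the pairwise step is right. But step~(iii) as you describe it has a genuine gap: if you bound $\Ex{N_t-1}$ by a sum of $\binom{n}{2}$ pairwise non-coalescence probabilities, each at most $(1-1/\Thit)^{t-O(\Thit)}$, the resulting bound is $\binom{n}{2}(1-1/\Thit)^{t-O(\Thit)}$, which is only small once $t \gtrsim \Thit \log n$. That route gives $\Tcoal = O(\Thit \log n)$, not $O(\Thit)$. The whole difficulty of the Aldous--Fill conjecture is precisely this $\log n$, and there is no ``partition-counting identity'' that removes it from a raw pairwise union bound. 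What actually kills the $\log n$ in \cite{Oliveira_TAMS} (and in the paper's Appendix~\ref{sec:proof.tcoal}) is a dyadic scheduling device: partition $[n]=A_0\cup\cdots\cup A_m$ with $|A_i|=2^i$, introduce an ``allowed killings'' process that provably coalesces no faster than the true one (the domination step, \cite[Prop.~3.4]{Oliveira_TAMS}), and run a sequence of epochs of geometrically decaying length $\sim \Thit/2^j$ during which only killings of $A_{j-1}$ by $\bigcup_{i\ge j}A_i$ are permitted. Because block $j$ has $2^j$ targets to hit and is given time $\sim\Thit/2^j$, the per-epoch failure probability from Theorem~\ref{thm:meetgeneral} is a fixed constant, and the epoch lengths telescope to $O(\Tmix+\Thit)=O(\Thit)$. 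Without describing this block/epoch structure and the associated monotone-coupling argument, the proposal does not reach the stated bound.

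A secondary error: you write the burn-in as $O(\Trel\log n)=O(\Thit)$, but $\Trel\log n$ is \emph{not} $O(\Thit)$ in general (on the $n$-cycle, $\Trel\log n=\Theta(n^2\log n)$ while $\Thit=\Theta(n^2)$). The correct burn-in is $O(\Tmix)$, and the fact you need is that $\Tmix=O(\Thit)$ for lazy (more generally, nonnegative-spectrum) reversible chains; this is how the paper gets from $c(\Tmix+\Thit)$ to $K\Thit$.
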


Kanade et al. \cite[Theorem 1.3]{Kanade_CoalMeet} obtained a partial form of \thmref{tcoal} where $K=O(\log (\dmax/\dmin))$ grows slowly with the ratio $\dmax/\dmin$. In particular, their result implies \thmref{tcoal} when $G$ is regular. They also obtained sharp results relating coalescing and meeting times in general \cite[Theorems 1.1 and 1.2]{Kanade_CoalMeet} that do not follow from our methods.  

\subsection{Additional comments}

We finish the introduction with additional comments on our results. Theorem \ref{thm:thit} gives a bound on $\Thit$ that may be viewed through the lens of electrical network theory. The relation between effective resistances and commute times \cite[Chapter 9]{LPW_book} gives the following immediate corollary.

\begin{corollary}[Proof omitted] Let $G$ be finite connected graph. Place unit resistances on each edge of $G$. Then the efffective resistance between any two vertices $x,y\in V$ is bounded by $K\,\sqrt{\Trel}/\dmin$, where $K>0$ is universal.\end{corollary}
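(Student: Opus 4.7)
The plan is to combine the commute-time identity with \thmref{thit} in a single line of algebra; the only minor subtlety is converting between lazy and simple random walk.

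First I would recall the commute-time identity for simple random walk on an unweighted connected graph: for any $x,y\in V$,
\[
\Exp{x}{\tau^{\rm SRW}_y}+\Exp{y}{\tau^{\rm SRW}_x}=2|E|\,R_{\rm eff}(x,y),
\]
where $R_{\rm eff}(x,y)$ is the effective resistance when every edge carries unit resistance (see \cite[Chapter~9]{LPW_book}). Since lazy random walk is obtained from simple random walk by tossing an independent fair coin at each step, the hitting times for LRW are distributed as a sum of a SRW hitting time and an independent geometric number of extra steps; in particular $\Exp{x}{\tau_y}=2\,\Exp{x}{\tau^{\rm SRW}_y}$. Combining these identities and using $|E|=n\,\davg/2$ yields
\[
R_{\rm eff}(x,y)=\frac{\Exp{x}{\tau_y}+\Exp{y}{\tau_x}}{4|E|}=\frac{\Exp{x}{\tau_y}+\Exp{y}{\tau_x}}{2\,n\,\davg}.
\]

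Next I would bound each of the two hitting times on the right by $\Thit$ and invoke \thmref{thit}:
\[
R_{\rm eff}(x,y)\leq\frac{2\,\Thit}{2\,n\,\davg}\leq\frac{1}{n\,\davg}\cdot\frac{20\,\davg}{\dmin}\,n\,\sqrt{\Trel+1}=\frac{20\,\sqrt{\Trel+1}}{\dmin}.
\]
Finally, since $\Trel\geq 1$ for LRW (the eigenvalues of $P$ lie in $[0,1]$, so $\lambda_2\in[0,1)$ gives $\Trel=1/(1-\lambda_2)\geq 1$), we have $\sqrt{\Trel+1}\leq\sqrt{2}\,\sqrt{\Trel}$, and the claim follows with $K=20\sqrt{2}$.

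There is no real obstacle: the work is entirely contained in \thmref{thit}, and what remains is essentially a one-line computation, which is why the authors remark that the corollary is immediate.
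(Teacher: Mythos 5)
Your argument is correct and matches the approach the paper signals (``the relation between effective resistances and commute times gives the following immediate corollary''): the commute-time identity plus $\Exp{x}{\tau_y}\le\Thit$ plus Theorem~\ref{thm:thit}. The only cosmetic detail is routing through the simple-walk identity and the factor-of-2 conversion to lazy hitting times, which is fine and does not change the substance.
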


Intriguingly, we do not know of a direct proof of this result via the Dirichlet form or a network reduction.

\thmref{return} on return probabilities may be used to tighten results on random walk intersections by Peres et al. \cite{PSSS_Intersections}. Let $t_I$ be the maximum expected intersection time of two independent lazy random walks over $G$ from worst-case initial states. Proposition 1.8 in  \cite{PSSS_Intersections} gives the following bound for regular graphs:
\[t_{I}\leq K\,\sqrt{n}\,\min\{t_{\rm unif},\Trel\log(\Trel+1)\}^{3/4},\mbox{ with $K>0$ universal}\]
and $t_{\rm unif}$ the {\em uniform} mixing time. With Anna Ben-Hamou, we have used \thmref{return} to obtain an improved bound that holds for all graphs:
\[t_{I}\leq K\,\sqrt{\frac{\davg\,n}{\dmin}}\,(\Trel)^{3/4},\mbox{ with $K>0$ universal,}\]
which can be shown to be sharp.  This result will appear in the full version of \cite{bop}.

Finally, the combination of Theorems \ref{thm:tcoal} and \ref{thm:thit} leads to the following corollary.

\begin{corollary}[Proof omitted] Under \defref{LRW}, let $\Tcoal$ be as in \thmref{tcoal}. Then:
\[\Tcoal \leq K\,\frac{\davg}{\dmin}\,n \,\sqrt{\Trel}, \mbox{ with $K>0$ universal}.\]\end{corollary}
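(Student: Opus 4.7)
The plan is to obtain this corollary as an immediate consequence of the two main theorems already proved in the paper, without any additional ideas. Specifically, \thmref{tcoal} provides a bound of the form $\Tcoal \le K_1 \Thit$ for some universal constant $K_1 > 0$, and \thmref{thit} bounds $\Thit$ in turn by $20 \,(\davg/\dmin)\, n \, \sqrt{\Trel+1}$. Chaining these two inequalities yields
\[\Tcoal \;\le\; K_1 \Thit \;\le\; 20\,K_1\,\frac{\davg}{\dmin}\,n\,\sqrt{\Trel+1}.\]

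The only cosmetic step is to absorb the ``$+1$'' inside the square root into the universal constant. Since the relaxation time $\Trel$ of any irreducible chain satisfies $\Trel \ge 1$, we have $\sqrt{\Trel+1}\le \sqrt{2}\,\sqrt{\Trel}$, so we can take $K := 20\sqrt{2}\,K_1$ and rewrite the bound as
\[\Tcoal \;\le\; K\,\frac{\davg}{\dmin}\,n\,\sqrt{\Trel},\]
as desired. (If one prefers to avoid assuming $\Trel \ge 1$, one can instead state the bound in terms of $\sqrt{\Trel+1}$; the reversible-chain convention used in the paper, however, already gives $\Trel\ge 1$.)

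There is no genuine obstacle here: both inputs are theorems proved elsewhere in the paper, and the algebra is trivial. The only thing worth flagging is that one should verify that the constant $K_1$ from \thmref{tcoal} is genuinely universal (i.e.\ does not hide any implicit dependence on $\davg/\dmin$ or on $n$), since otherwise the product would not give a clean bound. Inspecting the statement of \thmref{tcoal} confirms that $K_1$ is indeed universal, so the proof is complete. This is why the authors are comfortable omitting it.
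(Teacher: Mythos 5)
Your proof is correct and is exactly the argument the paper intends (the text right before the corollary says it follows from "the combination of Theorems \ref{thm:tcoal} and \ref{thm:thit}"). The observation that $\Trel\ge 1$ for a lazy chain (since $\lambda_2\ge 0$) properly justifies absorbing the $+1$ under the square root into the constant.
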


This may be compared to \cite[Theorem 1]{cooper} by Cooper et al.:
\[\Tcoal \leq K\,\log^4  n\,\Trel + K\,\left(\frac{\davg^2}{\frac{1}{n}\sum_{v\in V}d_v^2}\right)\,n\,\Trel.\]
Our bound is better when $G$ is regular or (more generally) when
\[\frac{1}{n}\sum_{v\in V}d_v^2\ll \davg\,\dmin\,\sqrt{\Trel}.\]
On the other hand, Cooper et al.'s bound is stronger when degrees are not balanced (eg. in a star graph). See \cite[Section 5]{cooper} for even stronger bounds when degrees are imbalanced.

\section{Preliminaries}\label{sec:prelim}

Before we prove our results, we discuss basic properties of LRW. The books \cite{aldousfill_book,LPW_book} contain much more material on reversible chains and random walks on graphs.

LRW as in \defref{LRW} is an irreducible chain because $G$ is connected. LRW is also reversible: $\pi(x)\,P(x,y) = \pi(y)\,P(y,x)$ for all $x,y\in V$. This means that $P$, as an operator over $\R^V$, is is self-adjoint with respect to the inner product:
\[\ip{f}{g}:=\sum_{x\in V}\,\pi(x)\,f(x)\,g(x)\,\,(f,g\in\R^V).\]So $P$ has real spectrum 
\[\lambda_1=1>\lambda_2\geq \lambda_3\geq \dots \lambda_n\geq -1.\]
and fact that $P(x,x)\geq 1/2$ for all $x\in V$ implies $\lambda_n\geq 0$. That is, LRW on a finite connected graph $G$ is a {\em reversible and irreducible finite Markov chain with nonnegative spectrum}. In particular $P$ is positive semidefinite and has a positive semidefinite square root $\sqrt{P}$. The relaxation time of such a chain is $\Trel:=(1-\lambda_2)^{-1}$.

Let $\one\in\R^V$ denote the function that is equal to $1$ everywhere. To each $\lambda_i$ we may associate an eigenfunction $\Psi_i$ (with $\Psi_1=\one$) so that $\{\Psi_i\}_{i=1}^n$ is an orthonormal basis of $(\R^V,\ip{\cdot}{\cdot\cdot})$. We use $\|\cdot\|$ to denote the norm corresponding to the inner product in $\R^V$.

\subsection{General upper bounds for hitting and returning}\label{sec:general}

Using the above notation, we explain why the bounds on hitting times \eqnref{Thitgeneral} and return probabilities \eqnref{returngeneral} are nearly optimal for a reversible chain $P$ with nonnegative spectrum.

Set $\gamma:=1/\Trel$, so that $\lambda_2=1-\gamma$. Let $\Pi$ be the matrix with entries $\Pi(x,y) = \pi(y)$ ($x,y\in V$). Let $I$ denote the identity matrix. The Markov chain that, at each step, stays put with probability $1-\gamma$ and jumps to a point picked from $\pi$ with probability $\gamma$, has transition matrix:
\[P_*:= (1 -\gamma)\,I + \gamma\,\Pi\]
$P_*$ has the same eigenbasis as $P$, with eigenvalues $1$ (with multiplicity $1$, eigenvector $\one$) and $\lambda_2=1-\gamma$ (multiplicity $n-1$). Thus $P$ and $P_*$ have nonnegative spectrum, the same relaxation time, and moreover $\ip{f}{P^tf}\leq \ip{f}{P_*^tf}$ for all $t\geq 0$ and $f\in \R^V$. In particular:
\[\forall x\in V\,\forall t\geq 0\,:\,P^t(x,x)-\pi(x)\leq P_*^t(x,x) - \pi(x) = (1-\Trel^{-1})^t\,(1-\pi(x))\]
and (with $\mathbb{E}_*$ denoting expectation for $P_*$)
\[\forall x\in V\,:\,\pi(x)\Exp{\pi}{\tau_x}\leq \pi(x)\mathbb{E}_{*,\pi}[\tau_x]=(1-\pi(x))\,\Trel.\]
In particular, \eqnref{returngeneral} is exact for $P_*$, whereas \eqnref{Thitgeneral} is sharp up to a constant factor.

\subsection{Lower bounds for hitting on graphs}\label{sec:sharp}

We showed above that, in the graph setting, our Theorem \ref{thm:thit} improves upon the best-possible results for chains with nonnegative spectrum. The next two examples show that the dependence of \thmref{thit} on $\Trel$ and $\davg/\dmin$ is best possible.
\begin{enumerate}
\item {\em A streched expander.} Take a $3$-regular expander on $n_0$ vertices. Subdivide each edge into a path of length $k$. The resulting graph $G$ has $n=\Theta(kn_0)$ vertices, relaxation time $\Theta(k^2)$, $\davg/\dmin=O(1)$ and $\Thit=\Omega(n_0k^2) = \Omega(n\sqrt{\Trel})$. 
\item {\em A lolipop.} Take a $d$-regular graph $G_0$ on $n/2$ vertices which has constant relaxation time. Connect a path of length $n/2$ to one of the the vertices of $G_0$. The resulting graph $G$ has $\Trel = \Theta(n^2)$, $\davg/\dmin =\Theta(d)$ and maximum hitting time $\Thit=\Omega(dn^2)=\Omega(dn\sqrt{\Trel})$.\end{enumerate}

\section{Return probabilities and hitting times}\label{sec:proof.thit}

In this section we prove \thmref{thit} on $\Thit$ and \thmref{return} on $P^t(x,x)$. We will need two lemmas on ``Green's function":
\begin{equation}\label{eq:defgreen}g_t(x,x):=\sum_{s=0}^{t}P^s(x,x).\end{equation}
\begin{lemma}[Proof in \secref{proof.sumfint}] \label{lem:sumfint}Consider $P$, a reversible and irreducible finite Markov chain with nonnegative spectrum, stationary distribution $\pi$ and state space $V$. Fix $x\in V$ and $t\geq 0$. Then:
\begin{eqnarray*}P^t(x,x) - \pi(x)\leq  \frac{g_t(x,x) - (t+1)\pi(x)}{t+1}\leq \left(\frac{e}{e-1}\right)\,\frac{g_{(\lceil \Trel\rceil-1)\wedge t}(x,x)}{t+1}.\end{eqnarray*}\end{lemma}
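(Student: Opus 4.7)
The plan is to prove both inequalities via spectral decomposition, exploiting the assumption that $P$ has nonnegative spectrum. As set up in Section~\ref{sec:prelim}, let $\{\Psi_i\}_{i=1}^n$ be an orthonormal eigenbasis of $P$ with $\Psi_1 = \one$ and eigenvalues $1=\lambda_1 > \lambda_2 \geq \dots \geq \lambda_n \geq 0$. A direct computation using $\ip{\cdot}{\cdot}$ gives
\[
P^s(x,x) - \pi(x) = \pi(x)\sum_{i\geq 2} \lambda_i^s \,\Psi_i(x)^2,
\]
which is nonnegative and, since each $\lambda_i\in[0,1)$, nonincreasing in $s$.

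For the first inequality, the monotonicity above implies
\[
(t+1)(P^t(x,x)-\pi(x)) \leq \sum_{s=0}^{t}(P^s(x,x)-\pi(x)) = g_t(x,x) - (t+1)\pi(x),
\]
which rearranges to the claim.

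For the second inequality, set $T_0:=(\lceil\Trel\rceil-1)\wedge t$. When $t\leq\lceil\Trel\rceil-1$ we have $T_0=t$ and the bound is immediate, since $g_t(x,x)-(t+1)\pi(x)\leq g_t(x,x)\leq \frac{e}{e-1}g_{T_0}(x,x)$. Otherwise $T_0=\lceil\Trel\rceil-1$. The key estimate is a uniform comparison of partial geometric sums: for each $i\geq 2$, using $\lambda_i\leq 1-1/\Trel$,
\[
\lambda_i^{T_0+1} \leq \left(1-\tfrac{1}{\Trel}\right)^{\lceil\Trel\rceil}\leq \left(1-\tfrac{1}{\Trel}\right)^{\Trel}\leq e^{-1},
\]
so that
\[
\sum_{s=0}^{t}\lambda_i^s \leq \frac{1}{1-\lambda_i} = \frac{1}{1-\lambda_i^{T_0+1}}\sum_{s=0}^{T_0}\lambda_i^s \leq \frac{e}{e-1}\sum_{s=0}^{T_0}\lambda_i^s.
\]
Multiplying by $\pi(x)\Psi_i(x)^2$ and summing over $i\geq 2$, the spectral identity then gives
\[
g_t(x,x)-(t+1)\pi(x) \leq \tfrac{e}{e-1}\bigl(g_{T_0}(x,x)-(T_0+1)\pi(x)\bigr)\leq \tfrac{e}{e-1}\,g_{T_0}(x,x),
\]
and dividing by $t+1$ finishes the proof.

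The main (mild) obstacle is recognizing that, even though the individual tail of $\sum_s\lambda_i^s$ depends on $i$, the ratio between the full sum and the partial sum up to $T_0$ can be controlled uniformly in $i$ by the single number $(1-1/\Trel)^{\Trel}\leq 1/e$; this is what forces the truncation level to be $\lceil\Trel\rceil-1$ and gives the constant $e/(e-1)$. Everything else is spectral bookkeeping and monotonicity.
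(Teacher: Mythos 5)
Your proof is correct and follows essentially the same route as the paper's: both use the spectral decomposition $P^s(x,x)-\pi(x)=\pi(x)\sum_{i\geq 2}\lambda_i^s\Psi_i(x)^2$, monotonicity in $s$ for the first inequality, and the fact that $\lambda_i^{\lceil\Trel\rceil}\leq(1-1/\Trel)^{\Trel}\leq e^{-1}$ to compare the full geometric sums with the truncated ones, yielding the factor $e/(e-1)$. The paper phrases the truncation step by sandwiching both $g_t(x,x)-(t+1)\pi(x)$ and $g_{\lceil\Trel\rceil-1}(x,x)-\lceil\Trel\rceil\pi(x)$ against $\sum_{i\geq 2}\frac{1}{1-\lambda_i}\ip{\Psi_i}{f}^2$, whereas you do the equivalent term-by-term comparison and also explicitly handle the trivial case $t\leq\lceil\Trel\rceil-1$; these are cosmetic differences only.
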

\begin{lemma}[Proof in \secref{proof.green}]\label{lem:green} Under \defref{LRW}, for $x\in V$ and $0\leq t \leq \lceil \Trel\rceil$:\[\frac{g_t(x,x)}{\pi(x)}\leq 6\,\frac{\davg\,n}{\dmin}\,\sqrt{t+1}.\]\end{lemma}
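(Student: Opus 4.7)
The plan is to convert $g_t(x,x)/\pi(x)$ into an integral over the spectral distribution of $P$ and to bound the integrand via a heat-kernel-type estimate. I begin with the spectral expansion
\[
\frac{g_t(x,x)}{\pi(x)} = (t+1) + \sum_{i\ge 2}\Psi_i(x)^2\,\frac{1-\lambda_i^{t+1}}{1-\lambda_i},
\]
and use the pointwise inequality $(1-\lambda^{t+1})/(1-\lambda)\le\min(t+1,\,1/(1-\lambda))$, valid for $\lambda\in[0,1)$. Interchanging sum and integral, the non-trivial part becomes
\[
\sum_{i\ge 2}\Psi_i(x)^2\min\!\bigl(t+1,\,1/(1-\lambda_i)\bigr) \;=\; \int_0^{t+1}\!W(s)\,ds,\qquad W(s)\;:=\!\sum_{i\ge 2:\,1-\lambda_i<1/s}\!\!\!\Psi_i(x)^2.
\]

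The heart of the proof is then the estimate $W(s)\le C(\davg n/\dmin)/\sqrt{s}$ for $s\le\Trel+1$ --- a ``heat-kernel''-type bound on the weighted eigenvalue density near the top of the spectrum. Once available, integration gives the claimed $\sqrt{t+1}$ factor via $\int_0^{t+1}1/\sqrt{s}\,ds = 2\sqrt{t+1}$. The leading $(t+1)$ term in the spectral expansion is handled separately: for $t\le\Trel$ we have $(t+1)\le\sqrt{(t+1)(\Trel+1)}$, and a simple Cheeger bound gives $\Trel\le O((\davg n/\dmin)^2)$, yielding $(t+1)\le O((\davg n/\dmin)\sqrt{t+1})$.

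The main obstacle is thus establishing the bound on $W(s)$, which is the graph-theoretic content of the lemma and is essentially equivalent to the diagonal heat-kernel bound $P^s(x,x)-\pi(x)\le C(d_x/\dmin)/\sqrt{s}$. A naive Cauchy--Schwarz on the spectral sum (via $\min(a,b)\le\sqrt{ab}$) yields only $\sqrt{(t+1)\Exp{\pi}{\tau_x}/\pi(x)}$, which on examples such as the path graph is loose by a factor of order $\sqrt{\Trel/t}$, so a sharper tool is needed. I would try two alternative routes: (i) a direct isoperimetric/Cheeger-type estimate on the weighted density of eigenvalues near $1$, exploiting the unit-conductance structure of $G$ and the lower bound $\dmin$ on degrees; or (ii) a direct probabilistic argument on the local time at $x$, decomposing visits into excursions that enter and exit $x$ through its $d_x$ neighbors (each of degree at least $\dmin$), so that the $d_x/\dmin$ factor arises naturally from the entry/exit probabilities and the $\sqrt{t+1}$ factor from the diffusive growth of the excursion count.
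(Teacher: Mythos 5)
Your spectral setup is correct and the Fubini reduction to bounding $W(s)=\sum_{i:\,1-\lambda_i<1/s}\Psi_i(x)^2$ is clean, as is your handling of the leading $(t+1)$ term via $\Trel = O((\davg n/\dmin)^2)$. But the plan has a genuine gap at its core: you never establish the key estimate $W(s)\leq C(\davg n/\dmin)/\sqrt{s}$, only name two routes one might try. As you observe, that estimate is essentially equivalent to the on-diagonal decay $P^s(x,x)-\pi(x)\leq C(d_x/\dmin)/\sqrt{s+1}$, which is exactly the content of \thmref{return}; but in the paper \thmref{return} is derived \emph{from} \lemref{green} (via \lemref{sumfint}), so invoking it here would be circular. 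Importing the Lyons--Oveis Gharan bound \eqnref{returnruss} as an outside input gives $\dmax$ in place of $d_x$: passing it through your $W(s)$ argument picks up an extra factor of order $\dmax/d_x$, so it does not recover the lemma as stated (and indeed replacing $\dmax$ by $d_x$ is part of what the paper is improving). The missing step is not a technicality---it is where all the graph structure enters.

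For comparison, the paper's proof bypasses spectral density entirely. Fix $\alpha>1$ and set $A_\alpha:=\{y:g_t(y,x)\leq\alpha\pi(x)(t+1)\}$; reversibility plus Markov's inequality give $\pi(A_\alpha)\geq 1-1/\alpha$. Splitting the visits to $x$ up to time $t$ at the hitting time $\tau_{A_\alpha}$ via the strong Markov property, the pre-hitting visits are controlled by an effective-resistance estimate (\propref{exitset}, where the $\dmin$ dependence enters through the geodesic-length bound of Fact~\ref{fact:path}), while the post-hitting visits are at most $\alpha(t+1)$ by definition of $A_\alpha$. Optimizing over $\alpha$ produces the $\sqrt{t+1}$ scaling, and the $t\leq\lceil\Trel\rceil$ hypothesis is used only to ensure the optimal $\alpha$ exceeds $1$ (via \propref{maxtrel}). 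If you wish to complete a spectral proof along your lines, you would need to prove the $W(s)$ bound from scratch---say by a Nash-type or isoperimetric argument with the correct degree normalization---which is a result of comparable depth to the lemma itself.
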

Let us see how \thmref{thit} and \thmref{return} follow from the Propositions. 
\begin{proof}[of \thmref{thit}] For $x\in V$, formula \eqnref{hitformula} may be rewritten as:
\[\Exp{\pi}{\tau_x} = \frac{\lim_{t\to +\infty}(g_t(x,x) - (t+1)\pi(x))}{\pi(x)}.\] \lemref{sumfint}, \lemref{green} and the formula $\pi(x)=d_x/\davg n$ give:
\[\Exp{\pi}{\tau_x}\leq \frac{e}{e-1}\,\frac{g_{(\lceil \Trel\rceil-1)}(x,x)}{\pi(x)}\leq \left(\frac{6e}{e-1}\right)\,\frac{\davg\,n}{\dmin}\,\sqrt{\Trel+1}.\] To finish, we bound $6e/(e-1)\leq 10$ and use $\Thit\leq 2\max_{x\in V}\Exp{\pi}{\tau_x}$ \cite[Lemma 10.2]{LPW_book}.\end{proof}

\begin{proof} [of \thmref{return}] Using the same results as in the previous proof,
\[(t+1)\,(P^t(x,x) - \pi(x))\leq g_t(x,x) - (t+1)\pi(x)\leq \left(\frac{6e}{e-1}\right)\,\frac{d_x}{\dmin}\,\sqrt{(t+1)\wedge(\Trel+1)}.\]
The result follows from estimating the constant by $10$.\end{proof}

\subsection{Sum up to the relaxation time and stop}\label{sec:proof.sumfint}
\begin{proof}[of \lemref{sumfint}] We use the notation in \secref{prelim}. Note that there exists a  $f\in\R^V$ such that, for all $t\geq 0$:
\begin{equation}\label{eq:tobesummed}P^t(x,x) - \pi(x) = \ip{f}{P^tf}  - \ip{f}{\one}^2 = \sum_{i=2}^n\,\lambda_i^t\,\ip{\Psi_i}{f}^2.\end{equation}
Since $0\leq \lambda_i\leq \lambda_2<1$ for each $i\in[n]\backslash\{1\}$, $P^t(x,x)-\pi(x)$ decreases with $t$. In particular, 
\begin{equation*}0\leq P^t(x,x) - \pi(x)\leq \frac{\sum_{s=0}^{t}(P^s(x,x)-\pi(x))}{t+1}= \frac{g_t(x,x) - (t+1)\pi(x)}{t+1}.\end{equation*}
Now note that, by \eqnref{tobesummed}, \[g_{t}(x,x) - (t+1)\pi(x) = \sum_{i=2}^n\,\left(\frac{1-\lambda_i^{t+1}}{1-\lambda_i}\right)\,\ip{\Psi_i}{f}^2\leq \sum_{i=2}^n\,\left(\frac{1}{1-\lambda_i}\right)\,\ip{\Psi_i}{f}^2,\]
whereas for $t=\lceil\Trel\rceil - 1$, $0\leq \lambda_i^{t+1}\leq \lambda_2^{\Trel}\leq e^{-1}$, and
\[g_{\lceil\Trel\rceil - 1}(x,x) -\lceil\Trel\rceil\pi(x) \geq  \sum_{i=2}^n\,\left(\frac{1-e^{-1}}{1-\lambda_i}\right)\,\ip{\Psi_i}{f}^2.\]
The proof finishes by combining the two last displays.\end{proof}

\subsection{Estimate on Green's function}\label{sec:proof.green}

We prove \lemref{green} by adapting \cite[Proposition 6.16]{aldousfill_book} to non-regular graphs. We need two Propositions that we prove in Appendix \ref{sec:proof.additional} via electrical network theory.

\begin{proposition}[Proof in \secref{proof.additional}]\label{prop:maxtrel}Under \defref{LRW}, 
\[\Trel\leq \max_{x,y\in V}(\Exp{x}{\tau_y} + \Exp{y}{\tau_x})\leq 6\left(\frac{\davg}{\dmin}\right)\,n^2 -4.\]\end{proposition}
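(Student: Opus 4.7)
The proposition consists of two inequalities, which I would handle separately.

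For the lower bound $\Trel\leq \max_{x,y}(\Exp{x}{\tau_y}+\Exp{y}{\tau_x})$: since the right-hand side trivially majorizes $\Thit=\max_{x,y}\Exp{x}{\tau_y}$, it suffices to prove the classical comparison $\Trel\leq \Thit$. A short self-contained proof uses the eigenfunction directly. Let $\Psi_2$ satisfy $P\Psi_2=\lambda_2\Psi_2$. Since $\ip{\Psi_2}{\one}=0$, I may choose $x_*\in\arg\max\Psi_2$ and $y_*\in\arg\min\Psi_2$ with $\Psi_2(x_*)>0>\Psi_2(y_*)$, and assume WLOG $\Psi_2(x_*)\geq|\Psi_2(y_*)|$. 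Applying optional stopping to the martingale $\Psi_2(X_t)+(1-\lambda_2)\sum_{s<t}\Psi_2(X_s)$ at $\tau_{y_*}$ yields
\[\Psi_2(x_*)-\Psi_2(y_*)=(1-\lambda_2)\,\Expwo{x_*}\!\left[\sum_{t=0}^{\tau_{y_*}-1}\Psi_2(X_t)\right]\leq(1-\lambda_2)\,\Psi_2(x_*)\,\Expwo{x_*}[\tau_{y_*}],\]
and since $\Psi_2(x_*)\leq \Psi_2(x_*)-\Psi_2(y_*)$, rearranging gives $\Trel\leq \Expwo{x_*}[\tau_{y_*}]\leq\Thit$.

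For the upper bound, the plan is to invoke the commute-time identity for LRW,
\[\Exp{x}{\tau_y}+\Exp{y}{\tau_x}=2\davg n\cdot R_{\rm eff}(x,y),\]
where $R_{\rm eff}(x,y)$ is the effective resistance in the network obtained from $G$ by placing unit resistance on each edge. (The factor $2\davg n=4|E|$ is the standard SRW formula $2|E|R_{\rm eff}$ adjusted for the doubling of commute times caused by laziness.) The task then reduces to proving the purely geometric bound $R_{\rm eff}(x,y)\leq 3n/\dmin - 2/(\davg n)$ (or similar) for any two vertices of a connected graph of minimum degree $\dmin$; multiplication by $2\davg n$ then produces exactly the claimed $6(\davg/\dmin)n^2-4$.

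The main obstacle is this universal resistance estimate. My approach would be Thomson's principle: exhibit a unit flow from $x$ to $y$ with energy at most $3n/\dmin$. A unit flow along a shortest $xy$-path has energy bounded by the diameter $\leq n-1$, which already handles the regime $\dmin\leq 3$. For larger $\dmin$, the minimum-degree hypothesis must be leveraged to thin the current: at each interior vertex of a near-shortest path one tries to spread the flow over its $\geq\dmin$ incident edges before refocusing, cutting the per-edge flow by a factor of order $\dmin$ and the total energy by the same factor. The delicate step is a careful accounting to ensure that the detours do not overlap adversely; constructions of this flavor appear throughout Aldous--Fill, Chapter~6. Once the resistance bound is in hand, substitution into the commute-time identity delivers the stated estimate, with the subtractive ``$-4$'' arising from a tightening at the endpoints.
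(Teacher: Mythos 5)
Your proof of the first inequality $\Trel\leq\Thit$ via the martingale $\Psi_2(X_t)+(1-\lambda_2)\sum_{s<t}\Psi_2(X_s)$ and optional stopping is correct and self-contained; the paper simply cites \cite[Lemma 12.17]{LPW_book} for this, so your route is a reasonable alternative, and your reformulation of the problem through the commute-time identity $\Exp{x}{\tau_y}+\Exp{y}{\tau_x}=2\davg n\,R_{\rm eff}(x,y)$ is also the same identity the paper uses (stated there in the $c(a,b)=\pi(a)P(a,b)$ normalization).

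The genuine gap is in the second inequality. You propose to prove $R_{\rm eff}(x,y)=O(n/\dmin)$ by constructing a unit flow whose current is thinned by a factor $\dmin$ at each interior vertex of a near-shortest path, and you explicitly acknowledge that ``the delicate step is a careful accounting to ensure that the detours do not overlap adversely.'' That accounting is not supplied, and it is nontrivial: to cut the energy of a length-$\Theta(n)$ path by a factor $\dmin$ you would need, in effect, $\Theta(\dmin)$ edge-disjoint $x$--$y$ paths of comparable length, which is a Menger-type statement you have not established. More importantly, the thinning is unnecessary. The key fact you are missing is that in a connected graph with minimum degree $\dmin$ the \emph{diameter itself} is at most $3n/\dmin-1$: along a geodesic $x_0,\dots,x_\ell$, the closed neighborhoods of $x_0,x_3,x_6,\dots$ are pairwise disjoint, each has $\geq\dmin+1$ vertices, and there are $\geq\ell/3$ of them, forcing $\ell\leq 3n/\dmin-1$ (this is the paper's ``Path Fact,'' taken from the proof of \cite[Proposition 10.16(b)]{LPW_book}). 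Once the diameter is bounded, the trivial series-resistance estimate $R_{\rm eff}(x,y)\leq{\rm dist}(x,y)$ (route the full unit current along one geodesic, no thinning) already gives $R_{\rm eff}(x,y)\leq 3n/\dmin-1$. Multiplying by $2\davg n$ yields $6(\davg/\dmin)n^2-2\davg n\leq 6(\davg/\dmin)n^2-4$ since $\davg n=2|E|\geq 2$; note that the ``$-4$'' comes from this last elementary bound, not from any ``tightening at the endpoints.'' Your observation that the plain diameter bound $\leq n-1$ ``handles only $\dmin\leq 3$'' suggests you had not noticed that the diameter shrinks with $\dmin$; that packing argument is the missing idea.
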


\begin{proposition}[Proof in \secref{proof.additional}]\label{prop:exitset} Under Definition \ref{def:LRW}, let $A\subset V$ be nonempty. Take $x\in V\backslash A$ and consider the number of returns to $x$ up to time $\tau_{A}-1$:
\[g_{\tau_A-1}(x,x):= \Exp{x}{\sum_{s=0}^{\tau_A-1}\,\Ind{\{X_s=x\}}}.\] Then:
\[\frac{g_{\tau_A-1}(x,x)}{\pi(x)}\leq 9\,\left(\frac{\davg\,n}{\dmin}\right)^2\,(1-\pi(A)).\]
\end{proposition}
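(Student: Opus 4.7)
The plan is to attack this via electrical network theory, consistent with its placement in the appendix on electrical methods. My starting point would be the identity
\[
g_{\tau_A-1}(x,x) \;=\; 2\,d_x\,R_{\mathrm{eff}}(x \leftrightarrow A),
\]
where $R_{\mathrm{eff}}$ denotes effective resistance in $G$ with unit conductance on every edge. I would establish this by an excursion decomposition: the number of visits to $x$ before $\tau_A$ is geometric, so $g_{\tau_A-1}(x,x) = 1/\mathbb{P}_x(\tau_A < T_x^+)$ with $T_x^+ := \inf\{t \geq 1 : X_t = x\}$. The classical escape-probability formula $\mathbb{P}_x^{\mathrm{SRW}}(\tau_A < T_x^+) = 1/(d_x R_{\mathrm{eff}}(x \leftrightarrow A))$, combined with the observation that laziness halves this escape probability (with probability $1/2$ the first step returns immediately to $x$), produces the factor of $2$.

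Substituting $\pi(x) = d_x/(\davg n)$ and noting that $\davg\,n\,(1-\pi(A)) = \sum_{y\notin A} d_y$, the claim reduces to
\[
R_{\mathrm{eff}}(x \leftrightarrow A) \;\leq\; \frac{9}{2\,\dmin^{2}}\,\sum_{y \notin A} d_y,
\]
which in the regular case ($\davg=\dmin=d$) specializes to the familiar resistance bound $R_{\mathrm{eff}}(x \leftrightarrow A) \leq C\,|V\setminus A|/d$. To prove it, I would contract $A$ to a super-vertex $a$, obtaining a graph $G'$ on $n'=|V\setminus A|+1$ vertices in which $R_{\mathrm{eff}}^{G}(x \leftrightarrow A) = R_{\mathrm{eff}}^{G'}(x,a)$, and then invoke the commute-time identity in $G'$ together with a commute-time bound of the type proved in \propref{maxtrel}.

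The hard part will be that the super-vertex $a$ has degree $|E(A,V\setminus A)|$ in $G'$, which may be much smaller than $\dmin$; the minimum degree of $G'$ is then degraded, weakening any direct application of \propref{maxtrel} to $G'$. Inflating $d_a$ by adding parallel edges only \emph{lowers} $R_{\mathrm{eff}}$, which is the wrong direction. To get around this I would split on $\pi(A)$. When $1-\pi(A) \geq (2/3)\,\dmin/\davg$, the monotonicity $R_{\mathrm{eff}}(x\leftrightarrow A) \leq R_{\mathrm{eff}}(x,y)$ for any $y\in A$, combined with the resistance bound $R_{\mathrm{eff}}^G(x,y)\leq 3n/\dmin$ implicit in \propref{maxtrel} (via the commute-time identity $\mathbb{E}_x[\tau_y]+\mathbb{E}_y[\tau_x]=4|E|\,R_{\mathrm{eff}}^G(x,y)$), already yields the desired inequality with room to spare. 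When $\pi(A)$ is close to $1$, the complement $V\setminus A$ is thin, with $|V\setminus A|\leq (\davg n/\dmin)(1-\pi(A))$; here I would construct a unit flow from $x$ to $A$ directly in $G$, routing current across the cut edges through short paths inside $V\setminus A$. Finding such a flow whose energy actually scales with $\bigl(\sum_{y\notin A}d_y\bigr)/\dmin^{2}$, rather than the weaker $|V\setminus A|$ given by a single-path bound, is the delicate step and the place where a careful argument (e.g.\ mixing the random-walk flows from the $d_y$ edges incident to $y\notin A$, then re-weighting via Cauchy--Schwarz) will have to be carried out.
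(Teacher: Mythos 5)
Your reduction to an effective-resistance estimate is correct (your identity $g_{\tau_A-1}(x,x)=2d_xR_{\rm eff}(x\leftrightarrow A)$ in unit conductances is equivalent to the paper's $g_{\tau_A-1}(x,x)/\pi(x)=R_{\rm eff}(x\leftrightarrow A)$ in the conductance model $c(a,b)=\pi(a)P(a,b)$), and the case $1-\pi(A)\geq\tfrac{2}{3}\dmin/\davg$ is handled correctly by the diameter bound. But the thin-$B$ case, which is the heart of the matter, is left open, and the flow construction you sketch (routing current through cut edges and re-weighting by Cauchy--Schwarz) is not needed.

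The idea you are missing is that the single-path bound is substantially stronger than the $O(|B|)$ you credit it with. Let $y\in A$ minimize ${\rm dist}(x,y)$ and let $x=x_0,\dots,x_k=y$ be a geodesic. Since $y$ is a closest point of $A$, every vertex among $x_0,\dots,x_{k-2}$ has \emph{all} of its neighbors in $B=V\setminus A$, so the truncated geodesic lies in $G[B]$ with all vertex degrees still $\geq\dmin$ there. Applying the Path Fact (Fact~\ref{fact:path}) inside $G[B]$ gives $k\leq 3|B|/\dmin+1$, hence $R_{\rm eff}(x\leftrightarrow A)\leq R_{\rm eff}(x\leftrightarrow y)\leq (2\davg n)\,k$, which already yields the stated bound once $|B|>2\dmin/3$ (so that the $+1$ is absorbed) and one uses the crude $|B|\leq\davg n\,\pi(B)/\dmin$. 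The remaining tiny case $|B|\leq 2\dmin/3$ is handled by the cut at $x$ alone: then $x$ has at least $d_x/3$ neighbors in $A$, so $R_{\rm eff}(x\leftrightarrow A)\leq 6\davg n/d_x$, which is below the target since $|B|\geq 1$. So there is no need to chase a bound scaling with $\sum_{y\notin A}d_y$; the coarser $|B|/\dmin$ single-path estimate, sharpened by confining the geodesic to $G[B]$, is both sufficient and simpler than the flow argument you outline. You also did not need to contract $A$ to a super-vertex, which is what created the spurious minimum-degree obstruction in your plan.
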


\begin{proof}[of \lemref{green}] Fix $\alpha>1$ and define the set:
\[A_\alpha:=\{y\in V\,:\, g_t(y,x)\leq \alpha\,\pi(x)\,(t+1)\}.\]
We bound the measure of $V\backslash A_\alpha$ via Markov's inequality and reversibility: \begin{eqnarray*}1 - \pi(A_\alpha) \leq  \sum_{y\in V}\pi(y)\frac{g_t(y,x)}{\alpha\,(t+1)\pi(x)} =  \frac{\sum_{y\in V}g_t(x,y)}{\alpha\,(t+1)} = \frac{1}{\alpha}.\end{eqnarray*}
In particular, $A_\alpha\neq \emptyset$ when $\alpha>1$. We now {\em claim} that:
\begin{equation}\label{eq:greenalpha}\frac{g_t(x,x)}{\pi(x)}\leq 9\,\left(\frac{\davg\,n}{\dmin}\right)^2\,\frac{1}{\alpha} + \alpha\,(t+1).\end{equation}
In fact, assuming this we obtain the Proposition by setting 
\[\alpha = \frac{3\,\davg\,n}{\dmin\,\sqrt{t+1}},\mbox{ which is which is $>1$ when $t\leq \lceil \Trel\rceil$ (cf. \propref{maxtrel}).}\]

To prove \eqnref{greenalpha}, we may assume $x\not\in A_\alpha$. Note that the number of returns to $x$ up to time $t$ is at most the sum of returns up to time $\tau_{A_\alpha}-1$ with those occurring at times $\tau_{A_\alpha}\leq s\leq \tau_{A_\alpha}+t$. The strong Markov property gives:
\begin{eqnarray*}\frac{g_t(x,x)}{\pi(x)}\leq \frac{g_{\tau_{A_\alpha}-1}(x,x)}{\pi(x)} +  \Exp{x}{\frac{g_t(X_{\tau_{A_\alpha}},x)}{\pi(x)}},\end{eqnarray*}
and inequality \eqnref{greenalpha} follows from  \propref{exitset} (applied to the first term in the RHS) and the definition of $A_\alpha$ (applied to the second term in the RHS).  \end{proof}

\section{The Meeting Time Theorem in general form}\label{sec:proof.meeting}

We now present a more general version of the Meeting Time Theorem (\thmref{meeting} above).  We use the same notation and definitions for trajectories, hitting times \&c that we defined for LRW.  We also take the material and notation from \secref{prelim} for granted.

\begin{theorem}\label{thm:meetgeneral}Consider a reversible and irreducible finite Markov chain with nonnegative spectrum $P$ which is defined over a finite set $V\neq \emptyset$ and has stationary measure $\pi$. Then for any $t\geq 0$ and any choice of $h_0,\dots,h_t\in V$:
\[\Prp{\pi}{\forall 0\leq s\leq t\,:\, X_s\neq h_s}\leq \left(1 - \frac{1}{\Thit}\right)^t.\]\end{theorem}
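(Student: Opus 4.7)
The plan is to express
\[p_t \;:=\; \Prp{\pi}{\forall\,0\le s\le t\,:\,X_s\ne h_s}\]
as a bilinear form and exploit the positive-semidefinite square root $S := \sqrt{P}$, available since $P$ has nonnegative spectrum. Writing $D_s$ for the self-adjoint projection on $(\R^V,\ip{\cdot}{\cdot}_\pi)$ that multiplies by the indicator of $V\setminus\{h_s\}$, a direct expansion gives
\[p_t \;=\; \ip{\one}{D_0 P D_1 P\cdots P D_t\,\one}_\pi.\]
Using $P=S^2$, $S\one=\one$, and self-adjointness of $S$ and each $D_s$, this rewrites in the very symmetric form
\[p_t \;=\; \ip{\one}{T_0 T_1 \cdots T_t\,\one}_\pi,\qquad T_s \,:=\, S D_s S,\]
where each $T_s$ is self-adjoint positive-semidefinite and satisfies $0\preceq T_s\preceq P\preceq I$.

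The heart of the proof is the operator-norm estimate $\|T_s\|_{\mathrm{op}}\le 1-1/\Thit$. Since $T_s=(SD_s)(D_sS)$ and the killed-chain matrix $D_s P D_s=(D_sS)(SD_s)$ share their nonzero spectra, it suffices to bound the top eigenvalue $\lambda^*_s$ of $D_s P D_s$ restricted to $V\setminus\{h_s\}$; the restricted operator is reversible and positive-semidefinite in the restricted weighted inner product (the latter because principal submatrices of PSD matrices are PSD). Let $\alpha_s$ denote its Perron left eigenvector, i.e.\ the quasi-stationary distribution on $V\setminus\{h_s\}$. The quasi-stationarity property gives $\Prp{\alpha_s}{\tau_{h_s}>t}=(\lambda^*_s)^t$, and summing over $t$ yields $\Exp{\alpha_s}{\tau_{h_s}}=1/(1-\lambda^*_s)$. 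Because $\alpha_s$ is a probability measure and $\Exp{y}{\tau_{h_s}}\le\Thit$ for every $y\in V$, we conclude $1/(1-\lambda^*_s)=\Exp{\alpha_s}{\tau_{h_s}}\le\Thit$, hence $\lambda^*_s\le 1-1/\Thit$.

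Combining these two ingredients, Cauchy--Schwarz and submultiplicativity of operator norm give
\[|p_t|\;\le\;\|\one\|_\pi\,\|T_0 T_1\cdots T_t\|_{\mathrm{op}}\,\|\one\|_\pi \;\le\; \prod_{s=0}^{t}\|T_s\|_{\mathrm{op}}\;\le\;\left(1-\tfrac{1}{\Thit}\right)^{t+1}\;\le\;\left(1-\tfrac{1}{\Thit}\right)^{t},\]
using $\|\one\|_\pi=1$. The main technical obstacle I anticipate is the sharp estimate $\lambda^*_s\le 1-1/\Thit$: the naive spectral expansion starting from $\pi$ (as in \secref{general}) is off by a factor of $(1-\pi(h_s))$, so one must really pass through the quasi-stationary distribution $\alpha_s$ and use the identity $\Exp{\alpha_s}{\tau_{h_s}}=1/(1-\lambda^*_s)$ together with the comparison $\Exp{\alpha_s}{\tau_{h_s}}\le\max_y\Exp{y}{\tau_{h_s}}\le\Thit$, which falls out of $\alpha_s$ being a convex combination of point masses.
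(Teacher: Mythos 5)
Your proof is correct and takes essentially the same route as the paper: both rewrite the non-hitting probability as a bilinear form, symmetrize with $\sqrt{P}$, reduce to bounding the operator norm of the killed transition operator $D_h P D_h$, and identify that norm with $1-1/\Exp{q_h}{\tau_h}\le 1-1/\Thit$ via the quasi-stationary distribution. The one (minor but pleasant) difference is the symmetrization: you peel off $S=\sqrt{P}$ on both ends and write $p_t=\ip{\one}{T_0\cdots T_t\,\one}$ with $T_s=SD_sS$, which produces $t+1$ factors and the slightly stronger bound $(1-1/\Thit)^{t+1}$, whereas the paper factors $M_t=\prod_{s=0}^{t-1}D_{h_s}PD_{h_{s+1}}$ and gets $t$ factors; either way the stated inequality follows.
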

\begin{proof}Given a linear operator $A:\R^V\to\R^V$, we denote its operator norm by:
\[\|A\|_{\rm op}:=\sup\{\|Af\|\,:\,f\in\R^V,\,\|f\|=1\} = \sup\{\ip{g}{Af}\,:\,f,g\in\R^V,\,\|f\|=\|g\|=1\} .\]
For $h\in V$, let $D_h$ be a $|V|\times |V|$ diagonal matrix that that has $1$'s at entries $(x,x)$ with $x\neq h$ and a zero at the entry $(h,h)$. $D_h$ is self-adjoint with respect to $\ip{\cdot}{\cdot\cdot}$. Defining
\[M_t := D_{h_0}\,P\,D_{h_1}\,P\,D_{h_2}\,\dots D_{h_{t-1}}\,P\,D_{h_t},\]
we see that
\[\Prp{\pi}{\forall 0\leq s\leq t\,:\, X_s\neq h_s} = \ip{\one}{M_t\one}\leq \|M_t\|_{\rm op}.\]
Thus it suffices to estimate the norm of $M_t$. We do this in two steps.
\begin{itemize}
\item[{\bf Step 1:}] $\|M_t\|_{\rm op}\leq (\max_{h\in V}\|D_h\,P\,D_h\|_{\rm op})^t$;
\item[{\bf Step 2:}] For any $h\in V$, $\|D_h\,P\,D_h\|_{\rm op}\leq 1 - \Thit^{-1}$.
\end{itemize}
For step 1 we employ $\sqrt{P}$ (defined in \secref{prelim}) and the fact that $D_{h_s}=D_{h_s}^2$ for each $s$. The norm of $M_t$ may be bounded by:
\begin{eqnarray}\nonumber \|M_t\|_{\rm op} = \left\|\prod_{s=0}^{t-1}\,(D_{h_s}\,P\,D_{h_{s+1}})\right\|_{\rm op} &=&  \left\|\prod_{s=0}^{t-1}\,(D_{h_s}\sqrt{P}\,\sqrt{P}\,D_{h_{s+1}})\right\|_{\rm op}\\ \label{eq:submult} \mbox{($\|\cdot\|_{\rm op}$ submultiplicative)}&\leq & \prod_{s=0}^{t-1}\,(\|D_{h_s}\sqrt{P}\|_{\rm op}\,\|\sqrt{P}\,D_{h_{s+1}}\|_{\rm op}).\end{eqnarray}
This last product of norms contains $2t$ terms of the form
$\|D_{h}\sqrt{P}\|_{\rm op}$  or $\|\sqrt{P}D_{h}\|_{\rm op}$ for elements $h\in V$. Now, for {\em any} linear operator $A$ with adjoint $A^\dag$, $\|A^\dag\|_{\rm op}$ and $\|A\|_{\rm op}$ are both equal to $\sqrt{\|A^\dag A\|_{\rm op}}$. Apply this to $A:=\sqrt{P}\,D_h$ and obtain:
\[\|D_{h}\sqrt{P}\|_{\rm op} = \|\sqrt{P}D_{h}\|_{\rm op} = \sqrt{\|D_{h}\,P\,D_h\|_{\rm op}}.\]
Thus step 1 follows from \eqnref{submult}. For step 2,  we fix $h\in V$. Notice that $D_h\,P\,D_h$ is self-adjoint and positive semidefinite (because $P$ is), so $\|D_h\,P\,D_h\|_{\rm op}$ equals the largest eigenvalue of $D_h\,P\,D_h$. According to \cite[Section 3.6.5 and Theorem 3.33]{aldousfill_book}, the largest eigenvalue is equal to $1 - \Exp{q_h}{\tau_h}^{-1}$ for some quasistationary disitribution $q_h$ over $V\backslash\{h\}$. Since $\Exp{q_h}{\tau_h}\leq \Thit$, the result follows. \end{proof}
 
\section{Coalescing random walks}\label{sec:tcoal}

In this section we present a generalization of \thmref{tcoal}.
\begin{theorem}[Proof sketch in Appendix \ref{sec:proof.tcoal}]\label{thm:coal2} Consider a reversible and irreducible finite Markov chain with nonnegative spectrum $P$ which is defined over a finite set $V\neq \emptyset$. Let $\taucoal$ denote the full coalescence time of a system of coalescing random walks that evolve according to $P$. Let $\Tcoal:=\Ex{\taucoal}$. Then:
\[\Tcoal\leq K\,\Thit, \mbox{ where $K>0$ is universal.}\]\end{theorem}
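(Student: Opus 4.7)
The plan is to follow the structure of the continuous-time proof in \cite{Oliveira_TAMS}, substituting Theorem \ref{thm:meetgeneral} for its continuous-time analog. The starting point is to lift the deterministic-trajectory meeting bound to a bound for two random trajectories. Namely, if $X$ and $Y$ are independent copies of $P$ with $X_0\sim \pi$ and $Y_0=y$ arbitrary, then conditioning on the entire trajectory of $Y$ and averaging Theorem \ref{thm:meetgeneral} over it yields
\[
\Prp{\pi,y}{\forall 0\leq s\leq t\,:\,X_s\neq Y_s}\leq \left(1-\frac{1}{\Thit}\right)^t.
\]
Since nonnegative spectrum is preserved by products of copies of $P$, this tail bound also controls the expected meeting time of two walkers when at least one of them starts from stationarity.

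Next, set up the coalescing system $\xi_t$ starting with one walker at every vertex of $V$. The crucial property of this initial condition is that each individual walker's marginal distribution at every time $t$ is $\pi$. I would then introduce a quadratic potential such as
\[
\Phi_t := \sum_{(u,v)\,:\,u,v\text{ uncoalesced at time }t}\pi(u)\pi(v),
\]
noting that $\Phi_0\leq 1$ and that $\Phi_t\equiv 0$ once coalescence is complete. The heart of the matter is to show that $\Phi_t$ decays like $\Phi_0(1-c/\Thit)^t$ (or in integrated form, $\Phi_{t+\Thit}\leq (1-c)\Phi_t$), using the two-walker meeting bound above together with the fact that in the coalescing system, any two uncoalesced walkers evolve as independent copies of $P$ until their first meeting, which is itself a coalescence event.

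Finally, given such a multiplicative decay of $\Phi_t$, I would close the argument by a standard iteration: in blocks of length proportional to $\Thit$, the potential shrinks by a fixed factor with probability bounded away from zero, so a Markov/regeneration argument yields $\Ex{\taucoal}\leq K\Thit$ for universal $K$.

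I expect the middle step, the potential decay, to be the main obstacle. In \cite{Oliveira_TAMS}, Poisson embedding turns the two-walker interaction into independent exponential clocks, producing a clean infinitesimal drift calculation for the analogous continuous-time potential. In discrete time one must account for laziness, the possibility of multi-way interactions in a single step, and the fact that ``one of the two walkers starts from $\pi$'' is only true marginally (the joint distribution of pairs is not product). The likely route is to work on time intervals of length $\Thit$, use the exponential tail from Theorem \ref{thm:meetgeneral} to control pair meeting within each interval, and use reversibility to dominate the joint pair distribution by a product distribution that restores the stationarity needed to invoke the random-to-random meeting bound.
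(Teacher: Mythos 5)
Your first step is correct and useful: conditioning on the trajectory of an independent walk $Y$ and averaging Theorem \ref{thm:meetgeneral} does give $\Prp{\pi,y}{\forall s\leq t: X_s\neq Y_s}\leq(1-1/\Thit)^t$. But the potential-function strategy you build on it is genuinely different from the paper's argument, and as sketched it has a real quantitative gap: it loses a factor of $\log(1/\pi_{\min})\geq\log n$, which is not universal.

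Concretely, your potential $\Phi_t=\sum_{(u,v)\text{ uncoalesced}}\pi(u)\pi(v)$ satisfies $\Phi_0\leq 1$, and the two-walker tail bound does give $\Ex{\Phi_t}\leq(1-1/\Thit)^t$. However, the smallest \emph{positive} value that $\Phi_t$ can take is at least $\pi_{\min}^2$, so to conclude $\Pr{\taucoal>t}=\Pr{\Phi_t>0}\leq\Ex{\Phi_t}/\pi_{\min}^2<1/2$ you need $t\gtrsim\Thit\log(1/\pi_{\min}^2)$. The same $\log n$ loss appears if you drop the $\pi$ weights and count pairs (then $\Phi_0\sim n^2$, smallest positive value $1$). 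A multiplicative decay rate that is uniformly $\Theta(1/\Thit)$ per step, applied to a potential with dynamic range $\pi_{\min}^{-2}$ or $n^2$, cannot yield an $O(\Thit)$ bound; your ``Markov/regeneration'' closing step does not escape this, since it still must drive the potential across its full dynamic range. Your caveats at the end of the proposal (laziness, multi-way interactions, non-product joint laws) are technical concerns you could likely patch, but they are not the real obstruction; the $\log n$ is.

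The paper (following \cite{Oliveira_TAMS}, which does \emph{not} use a potential-drift argument) avoids this loss via a dyadic ``delayed-killing'' construction. One partitions the $n$ walkers as $[n]=A_0\cup A_1\cup\dots\cup A_m$ with $|A_i|=2^i$, and schedules a sequence of epochs whose lengths shrink geometrically: after an initial burn-in of length $\sim\Tmix$, epoch $j$ has length on the order of $\Thit/2^j$. During epoch $j$ only coalescences between walkers in $A_{j-1}$ and the still-active later blocks are permitted (a set $\sA_t$ of ``allowed killings''); allowing fewer killings only stochastically increases the full coalescence time, so it suffices to bound the modified process. Because the epoch lengths form a geometric series, the total time is $O(\Tmix+\Thit)=O(\Thit)$ with no logarithmic factor, and within each epoch the meeting-time lemma (here, Theorem \ref{thm:meetgeneral}) shows that with uniform probability the target subcollection is eliminated. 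This geometric budgeting of time against a geometrically shrinking collection of particles is precisely what your uniform per-$\Thit$-block decay cannot reproduce, so you would need to import something like the dyadic scheme to close the argument.
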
 

\ignore{As stated above, this is the discrete-time version of the main result of \cite{Oliveira_TAMS}, which proved a conjecture of Aldous and Fill \cite{aldousfill_book}. The present proof is a simple adaptation of the argument in \cite{Oliveira_TAMS} where our \thmref{meetgeneral} replaces the ``meeting time lemma" in that paper. }

Let us define the process, postponing its analysis to Appendix \ref{sec:proof.tcoal}. Following \cite[Section 3.3]{Oliveira_TAMS}, we use a definition in terms of ``killed particles". That is, when several particles meet on the same vertex at the same time, only one the particle with lower index survives (one can alternatively think that the two particles coalesce). To make this formal, consider a Markov chain $P$ on a finite set $V$. Write $V=\{v_1,\dots,v_n\}$ and let \[(X_{t}(a))_{t\geq 0}\,:\,a=1,2,3,\dots,n\]
be $n$ independent trajectories on $V$ evolving according to $P$, each with initial state $X_0(i)=v_i$. Now define killed trajectories $Y_t(a)$ as follows. Let $\partial\not\in V$ be a ``coffin state". We set $Y_t(1)=X_t(1)$ for all $t$. Given $1<a\leq n$, assume $Y_s(1),\dots,Y_s(a-1)$ have been defined. Now let\footnote{This time $\kappa_a$ is called $\tau_a$ in \cite{Oliveira_TAMS}.}:
\[\kappa_a:=\inf\{t\geq 0\,:\, X_t(a) = Y_t(b)\mbox{ for some }b<a\}.\]
For $t\geq 0$, we set
\[Y_t(a):=\left\{\begin{array}{ll} X_t(a), & t<\kappa_a; \\ \partial, & t\geq \kappa_a.\end{array}\right.\]

One can check that the set valued process 
\[S_t:=\{Y_t(a)\,:\,1\leq a\leq n, Y_t(a)\neq \partial\}\]
is a time-homogeneous Markov chain on $2^V\backslash \{\emptyset\}$. It is this process that we call ``coalescing random walks"~evolving according to $P$". \ignore{We note that $(S_t)_{t\geq 0}$ is adapted to the filtration: 
\[\sF_t:=\sigma(X_s(i)\,:\, i\in [n],\, 0\leq s\leq t)\;\;(t\geq 0)\]
and that the $\kappa_a$ are all stopping times. } The {\em full coalescence time}\footnote{Called $C$ in \cite{Oliveira_TAMS}.} is:
\[\taucoal:= \inf\{t\geq 0\,:\, |S_t|=1\}.\]

\appendix

\section{Appendix}
\subsection{Auxiliary results via electrical network theory}\label{sec:proof.additional}

We prove here Propositions \ref{prop:maxtrel} and \ref{prop:exitset}. As noted in the main text, these results adapt the reasoning of \cite[Proposition 6.16]{aldousfill_book} to non-regular graphs. 

We work under \defref{LRW}. We will apply the theory of electrical networks \cite[Chapter 9]{LPW_book} by assigning conductance $c(a,b)=\pi(a)\,P(a,b)$ to each pair $(a,b)\in V^2$. In particular, $c(a)=\pi(a)$ and $c(a,b)=1/2\davg n$ for each pair $a\neq b$ with $ab\in E$. We will also use a standard {\em path fact} that follows from the argument in the end of the proof of \cite[Proposition 10.16, part (b)]{LPW_book}.

\begin{fact}[Path fact]\label{fact:path} If $x_0,\dots,x_\ell$ is a geodesic path in $G$, then $\ell\leq 3n/\dmin-1$.\end{fact}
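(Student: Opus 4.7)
The plan is to exploit the geodesic hypothesis to pack many disjoint ``closed neighborhoods'' into $V$, each of size at least $\dmin$, and then read off the bound from $|V|=n$. This is a routine adaptation of the classical regular-graph argument to variable degrees, so the proof is essentially a single packing count.

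First, I would establish the following packing claim: whenever $|i-j|\geq 3$, the closed neighborhoods $B_i := \{x_i\}\cup\{y\in V : y\sim x_i\}$ and $B_j := \{x_j\}\cup\{y\in V : y\sim x_j\}$ are disjoint in $V$. Indeed, any vertex $y\in B_i\cap B_j$ would satisfy $d(x_i,y)\leq 1$ and $d(y,x_j)\leq 1$, hence $d(x_i,x_j)\leq 2$; this contradicts $d(x_i,x_j)=|i-j|\geq 3$, where the equality uses that $x_0,\dots,x_\ell$ is a geodesic path.

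Next, I would apply the packing claim to the evenly spaced subsequence $x_0, x_3, x_6, \ldots, x_{3m}$ with $m := \lfloor \ell/3 \rfloor$, giving $m+1$ pairwise disjoint subsets of $V$, each of size at least $\dmin+1$ (the center together with its $\geq \dmin$ neighbors). Summing sizes yields $(m+1)\dmin\leq n$, hence $m\leq n/\dmin - 1$, and since $\ell\leq 3m+2$ this rearranges to the claimed bound $\ell\leq 3n/\dmin - 1$.

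No real obstacle arises: the packing claim is forced by the geodesic condition, and the concluding arithmetic is routine. The only minor care needed is to use \emph{closed} neighborhoods (rather than open ones) so that the lower bound $\dmin+1$ on their sizes is unconditional and the disjointness argument covers the case $y\in\{x_i,x_j\}$ cleanly.
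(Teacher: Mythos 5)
Your proof is correct and is essentially the same packing argument that the cited passage in Levin--Peres uses: closed neighborhoods of $x_0,x_3,x_6,\dots$ are pairwise disjoint along a geodesic, each has size at least $\dmin+1$, and the count gives the bound. The arithmetic $(m+1)\dmin\le n$ together with $\ell\le 3m+2$ indeed yields $\ell\le 3n/\dmin-1$.
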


\begin{proof}[of \propref{maxtrel}] The first inequality follows from \cite[Lemma 12.17]{LPW_book}. To bound the maximum commute time, we note that, by a simple network reduction argument, \[\max_{x,y\in V}(\Exp{x}{\tau_y} + \Exp{y}{\tau_x})\] is at most the diameter of $G$ times the resistance of a single edge. The latter quantity is $2\davg n$, whereas the diameter is $\leq 3n/\dmin-1$ by the Path Fact. Since $\davg\,n=2|E|\geq 2$, the Proposition follows. \end{proof}

\begin{proof}[of \propref{exitset}] Thanks to \cite[Lemma 9.6]{LPW_book} and a standard network reduction:
\[\frac{g_{\tau_A-1}(x,x)}{\pi(x)}=R_{\rm eff}(x\leftrightarrow A)\]
where $R_{\rm eff}$ denotes effective resistance. Letting $B:=V\backslash A$, our main goal will be to show:
\[\mbox{\bf Goal: }\;R_{\rm eff}(x\leftrightarrow A)\leq \frac{9\davg n}{\dmin}\,|B|,\]
as the result then follows from the fact that:
\[|B|\leq \sum_{x\in B}\frac{d_x}{\dmin} = \frac{\davg\,n}{\dmin}\,\pi(B).\]

To bound the effective resistance, consider first the case $|B|\leq 2\dmin/3$. In that case, $x$ has at least $d_x/3$ neighbors in $A=V\backslash B$, and (using $|B|\geq 1$)
\[R_{\rm eff}(x\leftrightarrow A)\leq \frac{1}{\sum_{a\in A}c(x,a)} \leq \frac{2\davg n}{\frac{d_x}{3}}= 6\,\frac{\davg n}{d_x}< \frac{9\davg n}{\dmin}\,|B|.\]

We now consider the case $|B|>2\dmin/3$. Let $y\in A$ be as close as possible to $x$ (in the graph distance). Since each edge $ab$ has resistance $2\davg\,n$, we have:  
\begin{equation}\label{eq:Reff1}R_{\rm eff}(x\leftrightarrow A)\leq R_{\rm eff}(x\leftrightarrow y)\leq (2\davg\,n)\,{\rm dist}(x,y).\end{equation}
We now bound the graph distance $k:={\rm dist}(x,y)$. Let $B:=V\backslash A$. If $x=x_0,\dots,x_k=y$ is a shortest path from $x$ to $y$, then each of the vertices $x_0,x_1,\dots,x_{k-2}\in B$ has all of their neighbors in $B$, because all points of $A$ are at distance $\geq k$ from $x$. We apply the Path Fact to the geodesic path $x_0,x_1,\dots,x_{k-2}$ in the induced subgraph $G[B]$ and obtain: \[k \leq \frac{3|B|}{\dmin}+1< \left(3 + \frac{3}{2}\right)\,\frac{|B|}{\dmin},\]
because we are assuming $|B|>2\dmin/3$. We deduce from \eqnref{Reff1} that:
\begin{equation*}\label{eq:Reff}R_{\rm eff}(x\leftrightarrow A)\leq \frac{9\,\davg\,n}{\dmin}\,|B|,\end{equation*}
as desired.\end{proof}

\subsection{Proof sketch for coalescing random walks}\label{sec:proof.tcoal}

We sketch here the proof of Theorem \ref{thm:coal2} on coalescing random walks. This Theorem generalizes Theorem \ref{thm:tcoal} from the Introduction.

\begin{proof} [Sketch of \thmref{coal2}] We explain how the proof of \cite{Oliveira_TAMS} can be modified to work in the discrete time setting. The notation and definitions from Section \ref{sec:tcoal} are taken for granted. 

As in \cite[Proposition 4.1]{Oliveira_TAMS}, it suffices to prove that: 
\[\Pr{\taucoal\geq c\,(\Tmix+\Thit)}\leq 1-\gamma\]
for some universal $c,\gamma>0$. We do this by considering a process where less particles die. That is, assume that for each $t\geq 0$ we have a set $\sA_t\subset [n]^2$ of `allowed killings". Define new killed process $Y^{\sA}_\cdot(i)$, $i\in[n]$, by setting $Y^{\sA}_t(1)=X_t(1)$ for all $i$; redefining $\kappa_a$ as
\[\kappa^{\sA}_s:=\inf\{t\geq 0\,:\, X_t(a) = Y_t(b)\mbox{ for some }b<a\mbox{ with }(b,a)\in \sA_t\};\]
and defining $Y^{\sA}_t(a)$ accordingly. As in \cite[Proposition 3.4]{Oliveira_TAMS}, we may observe that
the full coalescence time $\taucoal^{\sA}$ of this modified process dominates $\taucoal$ in the sense that $\Pr{\taucoal\geq t}\leq \Pr{\taucoal^{\sA}\geq t}$ for all $t\geq 0$.

We then apply the same strategy as in \cite[Section 4.2]{Oliveira_TAMS}. We partition $[n] = A_0\cup A_1\cup \dots \cup A_m$ so that for each $i\in[m]$ the elements of $A_{i-1}$ all preceeded all those of $A_i$ (so $A_0=\{1\}$) and $|A_i|=2^i$ for $i\leq m-1$ (so $m$ is of order $\log n$). We define a set of epochs by setting $t_\infty=0$ and defining $t_m$, $t_{m-1}$, $t_{m-2}$, $\dots$, $t_0$ as:
\begin{eqnarray*}t_m&:=&1+ \lceil 2\Tmix\rceil,\\ t_j &=&  1+ t_{j-1} + \left\lceil\frac{2^4\ln 5}{2^j}\,\Thit\right\rceil.\end{eqnarray*} 

This follows the definition in the paper except for ``$1 + \lceil\cdot\rceil$" terms. The sets of allowed killings $\sA_t$ are defined in the same way as in that paper:
\begin{enumerate}
\item {\em Epoch \# $\infty$:} $\sA_t=\emptyset$ for $t<t_m$;
\item {\em Epochs \# $m$ through \# $1$}: $\sA_t \equiv A_{j-1}\times \cap_{i=j}^mA_i$ for $t_{j}\leq t<t_{j-1}$.
\item {\em Epoch \# $0$:} set $\sA_{t}=[n]^2$
\end{enumerate}
Letting $c_0,c$ denote universal constants, we have 
\[t_0:=\sum_{i=1}^mt_i \leq 2\Tmix + 2(m+1) + c_0\,\Thit\leq c\,(\Tmix + \Thit )\]
where we have $m=O(\log n)$ whereas $\Tmix$ is at least linear in $n$. The proofs of \cite[Propositions 4.2 - 4.4]{Oliveira_TAMS} and go through as in that paper when one uses our version of the Meeting Time Lemma, \thmref{meetgeneral}. One can then finish the proof as in that article.\end{proof}

\textit{•}

\end{document}